\date{}
\newtheorem{theorem}{Theorem}[section]
\newtheorem{lemma}[theorem]{Lemma}
\newtheorem{corollary}[theorem]{Corollary}
\numberwithin{equation}{section}
\def\({\bigl(}   \def\){\bigr)}
\def\abs#1{\mathopen|#1\mathclose|} \let\card=\abs
\def\S{{\mathcal{S}}}
\def\class{{\mathcal{C}}}
\def\J{{J}}
\def\G{{\mathcal{G}}}
\def\P{{\mathcal{P}}}
\def\pairings{{\mathcal{C}}}
\def\class{{\mathcal{S}}}
\def\nonnegints{{\mathbb{N}}}
\let\eps=\varepsilon
\def\({\bigl(}   \def\){\bigr)}
\def\abs#1{\mathopen|#1\mathclose|} \let\card=\abs
\def\kvec{{\boldsymbol{k}}}
\def\ff#1#2{[#1]_{#2}}
\def\ac#1{\card{\pairings_{#1}}}
\def\dfrac#1#2{\lower0.15ex\hbox{\large$\frac{#1}{#2}$}}
\def\kmax{k_{\mathrm{max}}}
\def\Gp{{G_p}}
\def\Gnaive{{G_\mathrm{naive}}}
\def\kbar{k}
\def\nfrac#1#2{{\textstyle\frac{#1}{#2}}}
\def\dfrac#1#2{\lower0.15ex\hbox{\large$\frac{#1}{#2}$}}
\let\originalleft\left
\let\originalright\right
\renewcommand{\left}{\mathopen{}\mathclose\bgroup\originalleft}
\renewcommand{\right}{\aftergroup\egroup\originalright}
\title{Asymptotic enumeration of sparse multigraphs\\
 with given degrees}
\author{
Catherine Greenhill
\thanks{Research supported by the Australian Research Council.} \\
\small School of Mathematics and Statistics\\[-0.8ex]
\small The University of New South Wales\\[-0.8ex]
\small Sydney NSW 2052, Australia\\
\small \tt csg@unsw.edu.au\\
\and
Brendan D. McKay
\footnotemark[\value{footnote}]\\
\small Research School of Computer Science\\[-0.8ex]
\small Australian National University\\[-0.8ex]
\small Canberra, ACT 0200, Australia\\
\small\tt bdm@cs.anu.edu.au
}
\begin{document}

\maketitle

\begin{abstract}
Let $\J$ and $\J^*$ be subsets of $\nonnegints$ such that
$0,1\in\J$ and $0\in\J^*$.
For infinitely many $n$, let $\kvec=(k_1,\ldots, k_n)$ be a vector of
nonnegative integers whose sum $M$ is even.
We find an asymptotic expression for the number of multigraphs on
the vertex set $\{ 1,\ldots, n\}$ with degree sequence given
by~$\kvec$, such that every loop has multiplicity in~$\J^*$
and every non-loop edge has multiplicity in~$\J$.
Equivalently, these are symmetric integer matrices with values
$J^*$ allowed on the diagonal and $J$ off the diagonal.
Our expression holds when the maximum degree $\kmax$
satisfies $\kmax = o(M^{1/3})$.  We prove this result using
the switching method, building on an asymptotic
enumeration of simple graphs
with given degrees (McKay and Wormald, 1991).
Our application of the switching method introduces a novel way 
of combining several different switching operations into a single
computation.
\end{abstract}

\section{Introduction}\label{s:introduction}

Multigraphs arise in many applications including modelling
transportation networks~\cite{BCFMR}, the structure of RNA~\cite{GFZSTLKS,KKRR}
and in nonparametric statistics~\cite{godehardt}.
We use terminology ``multigraphs'' inclusively, with both multiple edges and 
(possibly multiple) loops allowed. 
We seek an asymptotic enumeration formula for 
multigraphs with a given degree sequence,
satisfying certain conditions.

Let $k_{i,n}$ be a nonnegative integer for all pairs $(i,n)$ of
integers which satisfy $1\leq i\leq n$.  Then for each $n\geq 1$,
let $\kvec = \kvec(n) = (k_{1,n},\ldots, k_{n,n})$.
We usually write $k_i$ instead of $k_{i,n}$.
Define $M = \sum_{i=1}^n k_i$. 
We assume that $M$ is even for an infinite number of values of $n$,
and tacitly restrict ourselves to such $n$.

For subsets $J,J^*$ of $\nonnegints$, define $\G(\kvec,\J, \J^*)$
to be the set of all multigraphs on the vertex set $\{ 1,\ldots, n\}$
with degree sequence given by $\kvec$ such that the multiplicity of
every loop belongs to $\J^*$ and the multiplicity of every non-loop edge
belongs to~$\J$.
Loops contribute 2 to the degree of their vertex.
Also define $\G(\kvec)$ to be the set of multigraphs
on $\{ 1,\ldots, n\}$
with degree sequence $\kvec$ (and no restrictions on the
multiplicities).

Equivalently we may think of $\G(\kvec,\J,\J^*)$
as the set of all $n\times n$ symmetric matrices $A = (a_{ij})$
with all diagonal entries in
$\J^*$, all off-diagonal entries in $\J$ and with
\begin{equation}
 2 a_{ii} + \sum_{j\neq i} a_{ij} = k_i
\label{rowsum}
\end{equation}
for $i=1,2,\ldots, n$.  Note that diagonal entries are weighted by~2
in the row sum.

Let $\kmax = \max_{i=1}^n k_i$.  
We find an asymptotic expression for 
\[ G(\kvec,\J,\J^*) = 
      |\G(\kvec,\J,\J^*)|\]
that holds when $\kmax = o(M^{1/3})$.
For $r=1,2,\ldots$ let
\[ M_r = \sum_{i=1}^n \, [k_i]_r\]
where $[a]_r = a(a-1)(a-2)\cdots (a-r+1)$ denotes the falling factorial.
Then $M_1 = M$ and $M_r \leq \kmax\, M_{r-1}$ for all $r\geq 2$.

For $i\ge 0$, define 
\[ x_i= \begin{cases}  \,1, & \text{ if $i\in J$,}\\
                      \, 0, & \text{ otherwise;}
  \end{cases}
  \qquad
  y_i= \begin{cases}  \,1, & \text{ if $i\in J^*$,}\\
                       \,0, & \text{ otherwise.}
  \end{cases}
\] 
Our main result is the following.

\begin{theorem}\label{main}
Let $\J$ and $\J^*$ be subsets of $\nonnegints$ such
that $0,1\in\J$ and $0\in\J^*$.
Suppose that $n\to\infty$, $M\to\infty$ and $\kmax^3=o(M)$. Then
 \begin{align*}
    G&(\kvec,\J,\J^*) \\
     &= \frac{M!}{(M/2)!\,2^{M/2} k_1!\cdots k_n!} 
        \exp\biggl( \(y_1-\tfrac12\)\frac{M_2}{M}
          + \(x_2-\tfrac12\)\frac{M_2^2}{2M^2} 
          + \frac{M_2^4}{4M^5}   \\
         &{\hskip 14em} - \frac{M_2^2M_3}{2M^4}
          + \(x_3-x_2+\tfrac13) \frac{M_3^2}{2M^3}  
                    + O(\kmax^3/M)\biggr).
 \end{align*}
\end{theorem}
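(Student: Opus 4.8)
\enspace
The plan is to work in the configuration model, rewrite $G(\kvec,\J,\J^*)$ as a weighted sum over pairings grouped by their pattern of multiple edges and loops, and evaluate that sum by the switching method --- treating loops, double edges and triple edges in one combined computation, with the McKay--Wormald enumeration of simple graphs as the base case.

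First set up the configuration model: for each vertex $i$ take a cell of $k_i$ points, $M$ points in all, and let $\Phi$ be the set of perfect matchings (``pairings'') of these points, so $\card\Phi = M!/((M/2)!\,2^{M/2})$. Each $P\in\Phi$ projects to a multigraph by amalgamating cells; write $a_{ij}=a_{ij}(P)$ for the multiplicities. A fixed multigraph $G$ with multiplicities $(a_{ij})$ has exactly $k_1!\cdots k_n!/\mu(G)$ pairings over it, where $\mu(G)=\prod_{i<j}a_{ij}!\,\prod_i a_{ii}!\,2^{a_{ii}}$ depends only on the multiset of nonzero multiplicities of $G$. Hence
\[
 G(\kvec,\J,\J^*)=\frac{1}{k_1!\cdots k_n!}\sum_{P\in\Phi}\ \prod_{i<j}x_{a_{ij}}\,a_{ij}!\ \prod_i y_{a_{ii}}\,a_{ii}!\,2^{a_{ii}},
\]
since the inner product equals $\mu(G)$ when the projection $G$ belongs to $\G(\kvec,\J,\J^*)$ and is $0$ otherwise. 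Grouping pairings by their \emph{profile} $\boldsymbol t=\bigl((t_m)_{m\ge2},(t^*_m)_{m\ge1}\bigr)$, where $t_m$ (resp.\ $t^*_m$) is the number of non-loop edges (resp.\ loops) of multiplicity $m$ in the projection, the summand is constant on each class $\Phi_{\boldsymbol t}$, so
\[
 G(\kvec,\J,\J^*)=\frac{1}{k_1!\cdots k_n!}\sum_{\boldsymbol t}\card{\Phi_{\boldsymbol t}}\ \prod_{m\ge2}(x_m\,m!)^{t_m}\ \prod_{m\ge1}(y_m\,m!\,2^m)^{t^*_m}.
\]
In particular $\Phi_{\boldsymbol 0}$ is a $(k_1!\cdots k_n!)$-fold cover of the set of simple graphs with degrees $\kvec$, since for a simple graph $\mu=1$; its cardinality is given asymptotically by McKay and Wormald.

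It remains to estimate $\card{\Phi_{\boldsymbol t}}/\card{\Phi_{\boldsymbol 0}}$. For each kind of defect --- a loop, a double edge, a triple edge --- I introduce a switching that deletes one occurrence of it while disturbing the rest of the profile minimally: a loop at $a$ with an auxiliary edge $cd$ becomes the edges $ac,ad$; a double edge $ab$ with an auxiliary edge $cd$ becomes $ac,bd$; and a triple edge is removed by the composite $4\!\to\!4$ move that opens two auxiliary non-edges simultaneously, so that no double edge appears along the way (this matters when $3\in\J$ but $2\notin\J$). Counting the forward and backward options for each switching in terms of $M$, $M_2$, $M_3$ and $\kmax$, over profiles of bounded total defect, yields the ratios $\card{\Phi_{\boldsymbol t+e}}/\card{\Phi_{\boldsymbol t}}$; the device this paper contributes is to run all three families of switchings as a single computation on the profile $\boldsymbol t$ rather than one defect at a time, which keeps the error uniform and makes the ensuing cancellations visible. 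Summing the resulting (approximately Poissonian) ratios over all $\boldsymbol t$ --- the contribution of large profiles being negligible because $\kmax^3=o(M)$ --- one obtains
\[
 \sum_{\boldsymbol t}\frac{\card{\Phi_{\boldsymbol t}}}{\card{\Phi_{\boldsymbol 0}}}\ \prod_{m\ge2}(x_m m!)^{t_m}\prod_{m\ge1}(y_m m!\,2^m)^{t^*_m}
 =\exp\!\Bigl(y_1\tfrac{M_2}{M}+x_2\tfrac{M_2^2}{2M^2}+(x_3-x_2)\tfrac{M_3^2}{2M^3}+O(\kmax^3/M)\Bigr),
\]
the exponent on the right being $2y_1\lambda_1^*+2x_2\lambda_2+6x_3\lambda_3$, where $\lambda_1^*\sim M_2/(2M)$, $\lambda_2\sim M_2^2/(4M^2)-M_3^2/(4M^3)$ and $\lambda_3\sim M_3^2/(12M^3)$ are the expected numbers of single loops, double edges and triple edges and the factors $2x_2$, $6x_3$, $2y_1$ per defect come from $m!\,2^m$ at $m=2,3$ and $m=1$; the $-x_2$ in the last coefficient appears because the second-order term $-M_3^2/(4M^3)$ in $\lambda_2$ --- which records that one must not count as a double edge something that is actually part of a triple (``exactly two'' versus ``at least two'') --- is multiplied by $2x_2$.

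Multiplying this by $\card{\Phi_{\boldsymbol 0}}/(k_1!\cdots k_n!)=\card{\G(\kvec,\{0,1\},\{0\})}$ and inserting the McKay--Wormald formula --- which is the $x_2=x_3=y_1=0$ instance of the theorem, and in particular supplies the terms $-\tfrac12 M_2/M$, $-\tfrac12 M_2^2/(2M^2)$, $M_2^4/(4M^5)$, $-M_2^2M_3/(2M^4)$ and $\tfrac13 M_3^2/(2M^3)$ --- gives the stated expression after collecting terms: $M_2/M=O(\kmax)$ and $M_2^2/M^2=O(\kmax^2)$ are kept, $M_2^4/M^5$, $M_2^2M_3/M^4$ and $M_3^2/M^3$ are each $O(\kmax^4/M)$ and kept, and everything else is $O(\kmax^3/M)$ since $\kmax^3=o(M)$. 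The hard part is the switching step: one must prove the ratios $\card{\Phi_{\boldsymbol t+e}}/\card{\Phi_{\boldsymbol t}}$ with a \emph{uniform} $O(\kmax^3/M)$ relative error over all profiles that contribute, handle the composite triple-edge switching and its boundary cases, track the second-order corrections and their mutual cancellations precisely enough to pin down the rational constants, and verify that every configuration left off the list --- double loops, quadruple edges, pairs of defects sharing or not sharing a vertex beyond those already accounted for --- really does contribute only $O(\kmax^3/M)$. The configuration-model reduction in the first two displays is routine.
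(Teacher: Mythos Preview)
Your outline is structurally sound and reaches the formula by the same route as the paper: pass to pairings, show that only single loops, double edges and triple edges matter, evaluate the resulting weighted sum using the McKay--Wormald ratio estimates, and multiply by the McKay--Wormald simple-graph count. Your displayed identity for $G(\kvec,\J,\J^*)$ as a weighted sum over profiles and the final exponential factor in $y_1,x_2,x_3$ both agree with what the paper obtains (Lemma~4.6).

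Two points of divergence are worth flagging. First, you misplace the paper's novelty. The multicoloured switching theorem (Theorem~2.1) is \emph{not} applied to the three loop/double/triple switchings; those are handled sequentially in Section~4 (Lemmas~4.3--4.5) by quoting the ratio estimates already proved in McKay--Wormald~(1991). The new combined-switching machinery is instead applied in Section~3 to \emph{fifteen} switchings on multigraphs --- and its purpose is precisely the step you relegate to a closing parenthesis: showing that multigraphs with loops of multiplicity~$\ge 2$ or non-loop edges of multiplicity~$\ge 4$ are negligible. This is harder than you suggest, because in your weighted pairing sum each profile carries a factor $\prod_m (m!)^{t_m}(m!\,2^m)^{t_m^*}$ that grows without bound in~$m$, and because $\J,\J^*$ may forbid intermediate multiplicities (e.g.\ $2\notin\J$ but $5\in\J$), so a switching cannot simply decrement a multiplicity by one. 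The fifteen switchings are engineered to jump over forbidden multiplicities or to land only in multiplicities always permitted in the auxiliary class~$\G_0$; pushing all fifteen through Theorem~2.1 at once is the paper's actual ``single computation.''

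Second, the paper splits the argument into a multigraph stage and a pairing stage: the reduction to bounded numbers of loops/doubles/triples (Lemma~3.1) is carried out on multigraphs, and only afterwards does one pass to pairings via equation~(4.1) for the summation. Your all-in-pairings version could presumably be made to work, but the two-stage organisation is what lets the paper reuse the McKay--Wormald pairing lemmas verbatim. In particular, your proposed composite $4\to4$ triple-edge switching is unnecessary in the paper's setup: the pairing-level computation in Section~4 has no $\J$-constraint, that having already been absorbed at the multigraph level.
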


For convenience, we restate the formula in the regular case.

\begin{corollary}\label{regular}
Suppose that $\kvec=(k,\ldots,k)$  with $kn$ even,
such that $k=o(n^{1/2})$ as $n\to\infty$.  Then
  \[
     G(\kvec,\J,\J^*) = \frac{(kn)!}{(kn/2)!\, 2^{kn/2} (k!)^n} \,
           \exp\( - Q(k,n) + O(k^2/n)\),
  \]
  where   
\[   
  Q(k,n) = \nfrac{1}{4}\, (k-1)\((-1)^{x_2}(k-1) +2(-1)^{y_1}\) 
                 + \frac{k^3}{12n} (6x_2 - 6x_3 + 1).
\]
\end{corollary}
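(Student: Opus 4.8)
The plan is to derive Corollary~\ref{regular} from Theorem~\ref{main} by direct substitution followed by bookkeeping; there is no substantial difficulty. Putting $\kvec=(k,\ldots,k)$ gives $M=kn$ and $M_r=n\,[k]_r$ for every $r$, so in particular $M_2=nk(k-1)$ and $M_3=nk(k-1)(k-2)$. The hypothesis $k=o(n^{1/2})$ is exactly equivalent to $\kmax^3=k^3=o(kn)=o(M)$, so Theorem~\ref{main} applies (with $M=kn\to\infty$, assuming $k\ge 1$), while its error term $O(\kmax^3/M)$ equals $O(k^3/(kn))=O(k^2/n)$, which is the error claimed in the corollary. Substituting into the prefactor turns $M!/\((M/2)!\,2^{M/2}\,k_1!\cdots k_n!\)$ into $(kn)!/\((kn/2)!\,2^{kn/2}\,(k!)^n\)$.

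Next I would evaluate the five explicit terms of the exponent in Theorem~\ref{main} one at a time. The first two are independent of $n$ and so must be retained exactly:
\[
 \(y_1-\tfrac12\)\frac{M_2}{M}=\(y_1-\tfrac12\)(k-1),
 \qquad
 \(x_2-\tfrac12\)\frac{M_2^2}{2M^2}=\(x_2-\tfrac12\)\frac{(k-1)^2}{2}.
\]
The remaining three carry a factor $1/n$:
\begin{gather*}
 \frac{M_2^4}{4M^5}=\frac{(k-1)^4}{4kn},\qquad
 -\frac{M_2^2M_3}{2M^4}=-\frac{(k-1)^3(k-2)}{2kn},\\
 \(x_3-x_2+\tfrac13\)\frac{M_3^2}{2M^3}=\(x_3-x_2+\tfrac13\)\frac{(k-1)^2(k-2)^2}{2kn}.
\end{gather*}
Since $(k-1)^4$, $(k-1)^3(k-2)$ and $(k-1)^2(k-2)^2$ each equal $k^4+O(k^3)$ uniformly in $k$, dividing by $kn$ and absorbing the resulting errors into $O(k^2/n)$ shows that the sum of these three terms equals
\[
 \frac{k^3}{n}\Bigl(\tfrac14-\tfrac12+\tfrac12\bigl(x_3-x_2+\tfrac13\bigr)\Bigr)+O(k^2/n)
 = -\frac{k^3}{12n}\,(6x_2-6x_3+1)+O(k^2/n).
\]

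Finally I would combine the two $n$-independent terms using the elementary identity $i-\tfrac12=-\tfrac12(-1)^i$, valid because $x_2,y_1\in\{0,1\}$: this rewrites them as $-\tfrac14(k-1)\((-1)^{x_2}(k-1)+2(-1)^{y_1}\)$. Adding the $1/n$-order contribution computed above produces exactly $-Q(k,n)+O(k^2/n)$ in the exponent, which with the prefactor above gives the stated formula. The only point needing a little care is to keep the $O(k)$ and $O(k^2)$ pieces exact while expanding the $O(k^3/n)$ terms only to leading order, and to check that everything discarded is genuinely $O(k^2/n)$ uniformly over the allowed range $1\le k=o(n^{1/2})$; this is immediate from the expansion $(k-1)^4=k^4-4k^3+6k^2-4k+1$ and its two analogues, all of whose correction terms are $O(k^3)$ uniformly in~$k$.
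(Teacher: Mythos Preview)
Your proposal is correct and follows exactly the approach implicit in the paper: the corollary is presented there without proof as a direct restatement of Theorem~\ref{main} in the regular case, and your substitution-and-simplification is precisely the intended derivation. All of your bookkeeping checks out, including the reduction of the three $1/n$-order terms to $-\tfrac{k^3}{12n}(6x_2-6x_3+1)+O(k^2/n)$ and the rewriting of the constant-order terms via $i-\tfrac12=-\tfrac12(-1)^i$.
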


Setting $y_1=x_2=x_3=0$ we recapture the asymptotic expression for
sparse simple graphs with given degrees presented in~\cite[Theorem 5.2]{MW91}.
Similarly, setting $y_1=1$, $x_2=x_3=0$ we obtain the asymptotic enumeration
by degree sequence of sparse 
graphs with loops allowed but no multiple edges: this is the 
second expression in~\cite[Theorem 1.5]{Loopy}.

We remark that the conditions $0\in J^*$ and $0,1\in J$ in Theorem~\ref{main}
can be replaced by weaker conditions: namely, that $J^*$ is
non-empty and that $J$ has at least two elements with the
smallest two consecutive.  Let $s$ denote the least element of
$J^*$ and let $t, t+1$ be the smallest two elements of $J$.
This case is reduced to ours by
subtracting $s$ from each diagonal element, $t$ from each
off-diagonal element, and $2s+(n-1)t$ from each $k_i$,
provided the new degree sequence satisfies the
conditions of Theorem~\ref{main}.

Theorem~\ref{main} is proved using
the switching method, building on an asymptotic
enumeration of simple graphs with given degrees~\cite{MW91}.  
Our application of the switching method introduces a novel way 
of combining several different switching operations into a single
computation.

The remainder of the paper is structured as follows.  The history of this asymptotic enumeration
problem is briefly reviewed in Section~\ref{ss:history}, then the new 
switching theorem is
given in Section~\ref{s:switchinglemma}.  In Section~\ref{s:multigraphs} we describe 15 types of
switchings on multigraphs, which are used to show that certain
multiplicities of edges or loops are rare.  These switchings are analysed together
in Subsection~\ref{ss:combine} using the new switching theorem.   
In Section~\ref{s:pairings} we complete the enumeration
with the help of some calculations performed in~\cite{MW91}.

Finally, in Section~\ref{s:naive} we show that a na\"\i ve argument leads to a
formula for $G(\kvec,\J,\J^*)$ which differs asymptotically
by a constant factor
from the result of Theorem~\ref{main} in the regular case. 
The constant factor takes two different values, depending on whether $2\in J$.

\subsection{History}\label{ss:history}

The earliest work on this problem was that of Read~\cite[p.\,156]{Read},
who found exact and asymptotic formulae when
$k_1=\cdots=k_n=3$, for all four combinations of 
$J^*\in\{ \{0\},\,\nonnegints\}$ and $J\in\{\{0,1\},\, \nonnegints\}$.
The best result for $J^*=\{0\},\allowbreak
J=\{0,1\}$ in the sparse range is
that of McKay and Wormald~\cite{MW91}
who treated $\kmax=o(M^{1/3})$; see that paper for
a survey of the many earlier results on the \hbox{0-1} case.
In addition to $J^*=\{0\},J=\{0,1\}$, Bender and Canfield~\cite{BC78}
found the asymptotics for $J^*=\{0\},J=\nonnegints$ when $\kmax=O(1)$.
(Although that paper allows nonzero diagonal entries, they contribute
singly to the row sums, not doubly as we have~it.)

In~\cite{MW90}, McKay and Wormald considered $J^*=\{0\},J=\{0,1\}$
in the dense domain defined by $\min\{k,n-k-1\}\ge c n/\log n$
for $c>\nfrac23$ and $\abs{k_i-k}=O(n^{1/2+\eps})$ for
all~$i$, where $k$ is the average degree.
McKay~\cite{ranX} found a better error term under the same
conditions, while Barvinok and Hartigan allowed a wider range
of degrees~\cite{BH12}.
Greenhill and McKay~\cite{Loopy} added the option of
loops in both the sparse and dense ranges by considering
$J^*=\{0,1\}$,  $J=\{0,1\}$.
Finally, McLeod and McKay~\cite{MM11} analysed the case of
$J^*=\{0\}, J=\nonnegints$ when $k_1=\cdots=k_n>cn/\log n$ for
$c>\nfrac16$.

For sparse rectangular matrices which are not necessarily symmetric,
see Greenhill and McKay~\cite{GM08}.

\section{The switching theorem}\label{s:switchinglemma}

In order to bound the number of multigraphs having some unusual 
properties, we will apply the method of switchings.  It will be
necessary to apply several switching types, which we could
analyse one at a time as in~\cite{GM08}.  Instead
we now prove a generalized switching theorem that allows us
to analyse all of them at once.

In order to facilitate use of the method in future work,
we will present the theorem in greater generality than is
required in this paper.

\begin{theorem}\label{switchings}
Let $\varGamma=\varGamma(V,E)$ be a directed multigraph,
and let $\alpha:E\to \mathbb{R}_+$ be a positive weighting
of the edges of $\varGamma$.  Fix a non-empty finite set $C$, whose elements we will
call \textit{colours}, and let $c:E\to C$ be an edge colouring of $\varGamma$  (which need not be proper).
For all $v\in V$ and $c\in C$ denote by $\varGamma_c^-(v)$ the set of
edges of colour~$c$ entering~$v$, and by $\varGamma_c^+(v)$ the
set of edges of colour $c$ leaving~$v$.

We introduce the set of variables $\{N(v) : v\in V\}\cup\{s(e) : e\in E\}$
and consider the following system
of linear inequalities on these variables:
\begin{align}
   N(v) \ge 0,& \qquad (v\in V) \label{ineq1} \\
   s(e) \ge 0,& \qquad (e\in E) \label{ineq2}\displaybreak[0]  \\
   \sum_{e\in\varGamma_c^-(v)} s(e) \le N(v),&
     \qquad(v\in V,\,\, c\in C) \label{ineq3}\\
   \sum_{e\in\varGamma_c^+(v)} \alpha(e)s(e) \ge N(v),&
     \qquad(v\in V,\,\, c\in C,\,\, \varGamma_c^+(v)\ne\emptyset). \label{ineq4}
\end{align}
Let $C(v)=\{c\in C :\varGamma_c^-(v)\ne \emptyset \}$ be the
set of colours entering~$v$.
For each $v\in V$ which is not a source, 
let $\lambda_c(v), c\in C(v)$
be positive numbers such that
$\sum_{c\in C(v)}\lambda_c(v)\le 1$.
For each $vw\in E$, define $\hat\alpha(vw)=\alpha(vw)/\lambda_{c(vw)}(w)$.
Extend this function to 
any directed path $P$ by defining $\hat\alpha(P)=\prod_{e\in P}\hat\alpha(e)$.

Now suppose that $Y,Z\subseteq V$ satisfy the following conditions:
\begin{itemize}
\item[1.] $Z\ne \emptyset$ and $Y\cap Z=\emptyset$;
\item[2.] If $v\in V$ is a sink of $\varGamma$, or if $\hat\alpha(vw)\ge 1$
   for some $vw\in E$, then $v\in Z$.
\end{itemize}
For any $W,W'\subseteq V$, define $\P(W,W')$ to be the set of 
non-trivial directed paths in $\varGamma$ which start in $W$, 
end in $W'$, and have no internal vertices in $Y\cup Z$.
Then every solution of~\eqref{ineq1}--\eqref{ineq4} satisfies
\begin{equation}\label{pathineq}
  \sum_{v\in Y} N(v) \le
  \frac{\max_{P\in\P(Y,Z)}\hat\alpha(P)}{1-\max_{P\in\P(Y,Y)}\hat\alpha(P)}
  \sum_{v\in Z} N(v),
\end{equation}
where the maximum over an empty set is taken to be~0.
\end{theorem}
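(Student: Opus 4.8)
The plan is to bound $\sum_{v\in Y}N(v)$ by iteratively pushing weight along directed paths out of $Y$, using inequalities~\eqref{ineq3} and~\eqref{ineq4} to transfer from a vertex to its out-neighbours. First I would fix $v\in V$ which is not a source and a colour $c\in C(v)$ such that $\varGamma_c^+(v)\neq\emptyset$ (if no such colour exists, then from~\eqref{ineq3} every edge of colour in $C(v)$ entering $v$ carries $s$-weight summing to at most $N(v)$, but there is nowhere to send it; this is exactly the situation handled by putting sinks and ``blocked'' vertices into $Z$). Combining~\eqref{ineq3} at $v$ (for colour $c$) with~\eqref{ineq4} at $v$, one gets that the weight $N(v)$ is dominated by $\sum_{e\in\varGamma_c^+(v)}\alpha(e)s(e)$, and then $s(e)$ at the head of each such edge is controlled by~\eqref{ineq3} at the head. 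The factor $1/\lambda_{c(e)}(w)$ appears because at the head $w$ the colour-$c(e)$ incoming weight must share $N(w)$ with the other incoming colours, and $\lambda_c(w)$ is precisely the allotted fraction; this is where $\hat\alpha(e)=\alpha(e)/\lambda_{c(e)}(w)$ enters, and why we need $\sum_{c\in C(w)}\lambda_c(w)\le 1$.

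The main technical step is to set up the right potential function and run the iteration cleanly. I would define, for each vertex $v$, a quantity capturing ``the total $s$-weight flowing into $v$ along colour-respecting paths from $Y$'', or dually work with the $N(v)$'s directly: show by induction on path length that for every $v\in Y$,
\[
  N(v)\le \sum_{P\in\P(\{v\},Z)}\hat\alpha(P)\sum_{w\in Z}(\text{contribution at }w)
        + \sum_{P\in\P(\{v\},Y)}\hat\alpha(P)\,N(\text{endpoint}),
\]
where paths are truncated the first time they hit $Y\cup Z$. Summing over $v\in Y$ and using condition~2 (so that every path is eventually ``absorbed'' in $Z$, since $\hat\alpha(e)<1$ for every edge leaving a vertex not in $Z$, forcing geometric decay and termination at a vertex of $Z$) gives
\[
  \sum_{v\in Y}N(v)\le \Bigl(\max_{P\in\P(Y,Z)}\hat\alpha(P)\Bigr)\sum_{w\in Z}N(w)
       + \Bigl(\max_{P\in\P(Y,Y)}\hat\alpha(P)\Bigr)\sum_{v\in Y}N(v),
\]
after bounding each $\hat\alpha(P)$ by the maximum over the appropriate path set and collecting terms. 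Rearranging yields~\eqref{pathineq}, provided $\max_{P\in\P(Y,Y)}\hat\alpha(P)<1$; when $\P(Y,Y)=\emptyset$ this maximum is $0$ by convention and the bound is immediate, and when it is positive the strict inequality $\hat\alpha(P)<1$ for the relevant edges (again from condition~2, since internal vertices of such a path lie outside $Y\cup Z$ hence have all out-edges with $\hat\alpha<1$) makes the denominator positive.

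The subtle point — and the step I expect to be the main obstacle — is justifying the interchange of summations and the termination of the iteration with full rigour: a priori a directed path in $\varGamma$ can be arbitrarily long or revisit vertices, so one must argue that the contribution of long paths is negligible. The key observation is that any vertex $v$ lying on such a path but not in $Y\cup Z$ is, by condition~2, neither a sink nor incident to an edge with $\hat\alpha\ge 1$; hence along any path leaving $v$ the cumulative $\hat\alpha$ strictly decreases, and since there are finitely many edges the maximal $\hat\alpha$ over edges from non-$Z$ vertices is a constant $\rho<1$, giving an honest geometric series $\sum_{\ell\ge 0}\rho^\ell$ that converges. I would make this precise by first proving a finite-truncation version of~\eqref{pathineq} for paths of length at most $L$, with an error term $O(\rho^L\sum_v N(v))$, and then letting $L\to\infty$. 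Everything else — the two-line manipulation of~\eqref{ineq3} and~\eqref{ineq4} at a single vertex, and the final rearrangement — is routine bookkeeping once the summation-over-paths framework is in place.
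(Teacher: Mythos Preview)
Your approach differs substantially from the paper's. The paper does not carry out a path iteration at all: it first collapses the multi-colour system to a single-colour one by setting $\hat s(vw)=\lambda_{c(vw)}(w)\,s(vw)$, weighting \eqref{ineq3} by $\lambda_c(v)$ and summing over $c\in C(v)$ (this is exactly your ``allotted fraction'' intuition, made precise), and noting that $\alpha(e)s(e)=\hat\alpha(e)\hat s(e)$ so that summing \eqref{ineq4} over colours yields its one-colour analogue. After deleting all edges leaving $Z$, it then invokes the one-colour result of Hasheminezhad and McKay~\cite[Theorem~3]{HM} as a black box. The paper's own contribution here is just the colour-aggregation; the path inequality itself is quoted, not reproved.

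Your treatment of the $\lambda_c$ is therefore on the right track and essentially rediscovers the paper's reduction. The genuine gap is in what follows. Your induction produces, for each $v\in Y$, a bound of the form $N(v)\le\sum_P\hat\alpha(P)\,(\cdots)$, and the passage ``bound each $\hat\alpha(P)$ by the maximum and collect terms'' does not deliver \eqref{pathineq}: without further argument it yields a sum of maxima over path lengths, or a sum over walks when the induced graph on $V\setminus(Y\cup Z)$ has cycles. Concretely, take one colour, $Y=\{y\}$, $Z=\{z\}$, one intermediate vertex $x$, and edges $y\to x$, $x\to z$, $x\to x$ with $\hat\alpha$-weights $p,q,r\in(0,1)$. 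The only path in $\P(Y,Z)$ is $y\to x\to z$, so the theorem asserts $N(y)\le pq\,N(z)$, and this is tight. But your iteration (bound $\hat s(yx)\le N(x)$, expand $N(x)$ via \eqref{ineq4}, bound $\hat s(xx)\le N(x)$ again, \ldots) only gives $N(y)\le\frac{pq}{1-r}\,N(z)$. The sharp bound requires the full constraint $\hat s(yx)+\hat s(xx)\le N(x)\le q\,\hat s(xz)+r\,\hat s(xx)$, whence $\hat s(yx)\le q\,\hat s(xz)$ after cancellation. In general one must track the total $\hat s$-flow entering each vertex of $Y\cup Z$ and bound it once by $N(w)$ via \eqref{ineq3}, rather than expanding into walks; this is the substance of the cited theorem and is not the ``routine bookkeeping'' you anticipate.
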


\begin{proof}
 The case of one colour appears in~\cite[Theorem 3]{HM}, apart from an
 inconsequential difference in the conditions on~$Z$.  We proceed
by reducing the general problem to one in which there is only
one colour, then modifying $\varGamma$
slightly so that~\cite[Theorem 3]{HM} applies.

Define $\varGamma^-(v)$ and
 $\varGamma^+(v)$ to be the set of all edges (regardless of colour)
 entering~$v$ or leaving~$v$, respectively.
 For each edge $vw\in E$, define $\hat s(vw)=s(vw)\lambda_{c(vw)}(w)$.
 If we weight inequality~\eqref{ineq3} by $\lambda_c(v)$ and sum
 over $c\in C(v)$, we obtain
 \begin{equation}
 \sum_{e\in\varGamma^-(v)} \hat s(e) \le N(v),
      \qquad(v\in V).
\label{1colour1}
 \end{equation}
 Similarly, if $\varGamma^+(v)\ne\emptyset$, then we can sum
 \eqref{ineq4} over those $c\in C$ with
 $\varGamma_c^+(v)\ne\emptyset$ to obtain
 \begin{equation}
   \sum_{e\in\varGamma^+(v)} \hat\alpha(e)\hat s(e) \ge N(v),
        \qquad(v\in V,\,\, \varGamma^+(v)\ne\emptyset).
 \label{1colour2}
\end{equation}
Together with the nonnegativity of $N(v)$ and $\hat s(e)$,
we have equations of the form of \eqref{ineq1}--\eqref{ineq4}
with only one colour.

Next remove from $\varGamma$ all edges $vw$ where $v\in Z$.
Doing so can only weaken the conditions, by decreasing the left
hand side of some inequalities in (\ref{1colour1}) 
or removing some inequalities in 
(\ref{1colour2}). 
Note that none of the quantities in~\eqref{pathineq} are changed
by removal of these edges.
After this change to $\varGamma$, $Z$ satisfies the requirements of~\cite[Theorem 3]{HM}, with the variable $X$ defined there
set equal to~$Z$.
Applying that theorem to the system defined by
(\ref{1colour1}), (\ref{1colour2}) and the nonnegativity of
$N(v)$ and $\hat s(e)$ gives~\eqref{pathineq}.~%
\end{proof}

We now describe how Theorem~\ref{switchings} can be used for
counting.

Suppose we have a finite set of ``objects'' partitioned into disjoint
classes $\class(v)$, where $v\in V$ for some index set $V$.
Define $N(v)=\card{\class(v)}$ for each $v\in V$. 
Also suppose that for each $c\in C$ we have a relation
$\varPsi_c$ between objects: to be precise, $\varPsi_c$ is
a multiset of ordered pairs $(Q,R)$ of objects.
(We call $\varPsi_c$ a \textit{switching} and usually define it by
some operation that modifies $Q$ to make~$R$.)

Now define an edge-coloured directed multigraph $\varGamma=(V,E)$
with vertex set~$V$, where
$\varGamma$ has a directed edge $vw$ of colour $c$ if and only if
$(Q,R)\in \varPsi_c$ for some $Q\in\class(v)$ and $R\in\class(w)$.
(There is at most one edge of each colour between any pair
of distinct vertices of $\varGamma$.) 
For each $vw\in E$ let
$s'(vw) = \card{\{(Q,R)\in \varPsi_c : Q\in\class(v),R\in\class(w)\}}$,
counting multiplicities, where $c$ is
the colour of~$vw$.

Fix $v\in V$ and $c\in C$ such that $\varGamma_c^+(v)\neq \emptyset$.
Suppose that for any $Q\in\class(v)$ there are at least
$a_c(v)>0$ objects $R$ with $(Q,R)\in \varPsi_c$, counting
multiplicities.  Then
\[
    \sum_{e\in\varGamma_c^+(v)} s'(e)\ge a_c(v)N(v).
\]
Similarly,  for fixed $w\in V$ and $c\in C$, suppose that for every $R\in\class(w)$
there are at most $b_c(w)>0$ objects $Q$ with $(Q,R)\in \varPsi_c$,
counting multiplicities. Then
\[
    \sum_{e\in\varGamma_c^-(w)} s'(e)\le  b_c(w)N(w).
\]
Defining $s(vw)=s'(vw)/b_{c(vw)}(w)$
and $\alpha(vw)=b_{c(vw)}(w)/a_{c(vw)}(v)$ we obtain
equations~\eqref{ineq1}--\eqref{ineq4}.
Theorem~\ref{switchings} can thus be used to bound the relative values
of $\sum_{v\in Z} \card{\class(v)}$ and $\sum_{v\in Y} \card{\class(v)}$ if
$Y,Z$ satisfy the requirements of the lemma.
Since $\sum_{v\in Z} \card{\class(v)}\le\sum_{v\in V} \card{\class(v)}$,
this also bounds $\sum_{v\in Y} \card{\class(v)}$ relative to
$\sum_{v\in V} \card{\class(v)}$; i.e., it bounds the fraction of
all objects that lie in $\bigcup_{v\in Y} \S(v)$.

\section{Switchings on multigraphs}\label{s:multigraphs}

Define
\begin{align}
\begin{split}
N_1 &= \max\bigl\{ \lceil\log M\rceil, \lceil 480 M_2/M\rceil\bigr\},\\
N_2 &= \max\bigl\{ \lceil\log M\rceil, \lceil 240 M_2^2/M^2\rceil\bigr\},\\
N_3 &= \max\bigl\{ \lceil\log M\rceil, \lceil 240 M_3^2/M^3\rceil\bigr\}.
\end{split}
\label{ndefs}
\end{align}
(We have not attempted to optimise constants.)
Given a multigraph $Q$, let $\ell_D(Q)$ denote the number of loops
with multiplicity $D$ and let $e_D(Q)$ denote the number of non-loop edges
with multiplicity $D$, for $D\geq 1$.

Let
\[ 
\G_0 = \G(\kvec,\J\cup \{4,5,6,\ldots\},
                            \J^*\cup\{2,3,4,\ldots\})\\
\]
be the set of all multigraphs with degree sequence $\kvec$, and allowing all 
multiplicities except for those in $\{ 1\} - \J^*$ on loops and those
in $\{2,3\}-\J$ on non-loops.  Note that
$\G(\kvec,\J,\J^*)\subseteq \G_0$.

We also define the subsets $Y$, $Z$ of $\G_0$ by
\begin{align}
 Y &= \G_0 - \{ Q\in \G_0 \mid \ell_D(Q) =0 \text{ for $D\geq 2$, }\,
            e_D(Q) = 0 \text{ for $D\geq 4$, }\, \notag\\
   & \hspace*{4cm} 
                   \ell_1(Q)\leq N_1,\,\,
            e_2(Q) \leq N_2,\,\, e_3(Q)\leq N_3\}, \label{Ydef} \\
Z &= \{ Q\in \G_0 \mid \ell_D(Q) =0 \text{ for $D\geq 2$, }\,
            e_D(Q) = 0 \text{ for $D\geq 4$, }\notag\\
   & \hspace*{3cm}  
                   \ell_1(Q)\leq \lceil N_1/2\rceil,\,\,
         e_2(Q) \leq \lceil N_2/2\rceil,\,\, e_3(Q)\leq \lceil N_3/2\rceil\}.
\label{Zdef}
\end{align}

We define 15 coloured switchings which act to reduce the
number of loops or edges with high multiplicities, moving
in steps from $Y$ towards $Z$.  These switchings are defined
below, together with a description of when each should be used.
Indeed, any given switching  will only be used on multigraphs in
$\G_0$ for which
none of the  switchings with a lower-labelled colour 
are applicable.
An important property of all the switchings is that they do not
create simple loops (with multiplicity 1) and they do not create
non-loop edges of multiplicity 2~or~3, as these may not be allowed
for multigraphs in~$\G_0$.

For each switching colour $c$ and multigraphs $Q$, $R$ 
we will define the following parameters:  

\vspace{-\topsep}
\begin{itemize} \itemsep=0pt
\item $a_c(Q)$ is a lower bound on the number of ways in which a
switching of colour $c$ can be applied to $Q$.
\item $b_c(R)$ is an upper bound on the number of ways in which a
switching of colour $c$ can produce $R$.
\item If $R$ can be obtained from $Q$ by performing
a switching of colour $c$, then we let $\alpha(Q,R)$ be an upper bound on
$b_c(R)/a_c(Q)$.  (Note that the colour $c$ is determined by the
edge~$QR$.)
\end{itemize}
Finally in Section~\ref{ss:combine} we will analyse all the switchings
at once by applying Theorem~\ref{switchings}.

\subsection{Switchings of colour 1}\label{ss:switching1}

A switching of colour 1 is used to reduce the number of loops of 
multiplicity equal to 2 or multiplicity at least 4. It is applied
to multigraphs $Q\in \G_0$ with $L(Q) > \lceil 3M^{1/2}\rceil$, where 
\begin{equation}
\label{ldef}
 L(Q) = \ell_2(Q) + \sum_{D\geq 4}\ell_D(Q)
\end{equation}
is the number of loops in $Q$ with multiplicity 2 or
multiplicity at least 4.
The switching is described by the sequence $(v_1,v_2)$
of distinct vertices such that 
there is a loop of multiplicity $D_1$ at $v_1$ and a 
loop of multiplicity $D_2$ at $v_2$, with 
$D_1, D_2\in \{2\}\cup\{4,5,\ldots\}$.  

Let $m$ be the multiplicity of the edge
from $v_1$ to $v_2$ in $Q$ (which may equal zero).  The switching reduces
the multiplicity of both loops by 2, and increases the multiplicity of 
the edge $\{ v_1,v_2\}$ by 4.  This operation is depicted in
Figure~\ref{f:switching1}.

\begin{figure}[ht]
\begin{center}
\unitlength=0.9cm
\begin{picture}(9,1.5)(0,0)
\put(0,0.0){
    \includegraphics[scale=0.72]{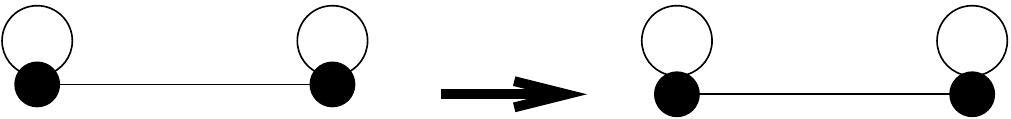}
        }
\put(0.2,1.15){$D_1$}
\put(2.6,1.15){$D_2$}
\put(4.9,1.15){$D_1-2$}
\put(7.3,1.15){$D_2-2$}
\put(1.5,0.5){$m$}
\put(6.3,0.4){$m+4$}
\end{picture}
\caption{A switching of colour 1}
\label{f:switching1}
\end{center}
\end{figure}

Suppose that $R$ can be produced from $Q$ using a switching of colour 1.
The number of ways to perform a switching of colour 1 in $Q$ is exactly
\[ [ L(Q) ]_{2} \geq 9M \]
as there is no restriction on the value of $m$.  The number of
ways that $R$ can be produced using a switching of colour 1
is bounded above by $M/4$, which is a bound on the number of ways to choose
an oriented edge with multiplicity at least 4.  
Hence we can set $a_1(Q) = 9M$ and $b_1(R) = M/4$, giving
\[ \alpha(Q,R) = \dfrac{1}{36}. \]

\subsection{Switchings of colour 2}\label{ss:switching2}

A switching of colour 2 is used to reduce the number of loops of 
multiplicity three to at most $\lceil 3M^{1/2}\rceil$. 
It is performed for multigraphs $Q$ for which switchings of colour 1
do not apply and such that $\ell_3(Q) > \lceil 3M^{1/2}\rceil$.
The switching is described by the sequence $(v_1,v_2)$ of
distinct vertices such that 
there is a loop of multiplicity 3 at $v_1$ and at $v_2$.

Let $m$ be the multiplicity of the edge from $v_1$ to $v_2$ in $Q$
(which may equal zero).  The switching removes the loop at $v_1$
and at $v_2$, and increases the multiplicity of the edge $\{ v_1,v_2\}$
by 6, as illustrated in Figure~\ref{f:switching2}. 

\begin{figure}[ht]
\begin{center}
\unitlength=0.9cm
\begin{picture}(9,1.5)(0,0)
\put(0,0.0){
    \includegraphics[scale=0.72]{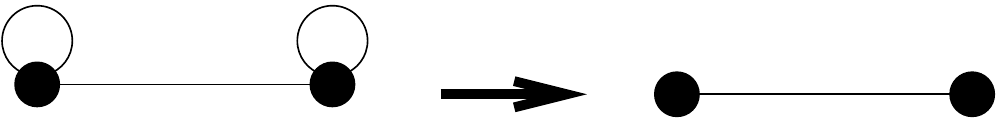}
        }
\put(0.3,1.1){$3$}
\put(2.7,1.1){$3$}
\put(1.5,0.5){$m$}
\put(6.4,0.4){$m+6$}
\end{picture}
\caption{A switching of colour 2}
\label{f:switching2}
\end{center}
\end{figure}

Suppose that the multigraph $R$ can be produced from $Q$ using a
switching of colour 2.  
The number of ways to perform a switching of colour 2 in $Q$ is 
exactly $[\ell_3(Q)]_2 \geq 9M$, and the number of ways that $R$ can
be produced using a switching of colour 2 is at most $M/6$
(since an oriented edge of multiplicity at least 6 determines the
reverse operation, from $R$ to $Q$).
Hence we can take
$a_2(Q)= 9M$ and $b_2(R) = M/6$, leading to
\[ \alpha(Q,R) = \dfrac{1}{54}. \]

\subsection{Switchings of colour 3}\label{ss:switching3}

A switching of colour 3 is used to reduce the number of loops of
multiplicity 1 to at most $\lceil M^{1/2}\rceil$.
It is applied to multigraphs $Q$ for which switchings of colour 1 and 2
do not apply
and such that $\ell_1(Q) > \lceil M^{1/2}\rceil$.
The switching is defined by the sequence
$(v_1,v_2,v_3)$ of distinct vertices such that each of $v_1$, $v_2$,
$v_3$ has a simple loop in $Q$ (of multiplicity 1), and
none of the edges $\{ v_1,v_2\}$, $\{ v_1,v_3\}$,
$\{ v_2,v_3\}$ are present in $Q$.
The switching removes these three loops and joins the three vertices
pairwise by simple edges (of multiplicity 1), as illustrated in
Figure~\ref{f:switching3}.

\begin{figure}[ht]
\begin{center}
\unitlength=0.9cm
\begin{picture}(9,3.8)(0,0)
\put(0,-0.3){
    \includegraphics[scale=0.72]{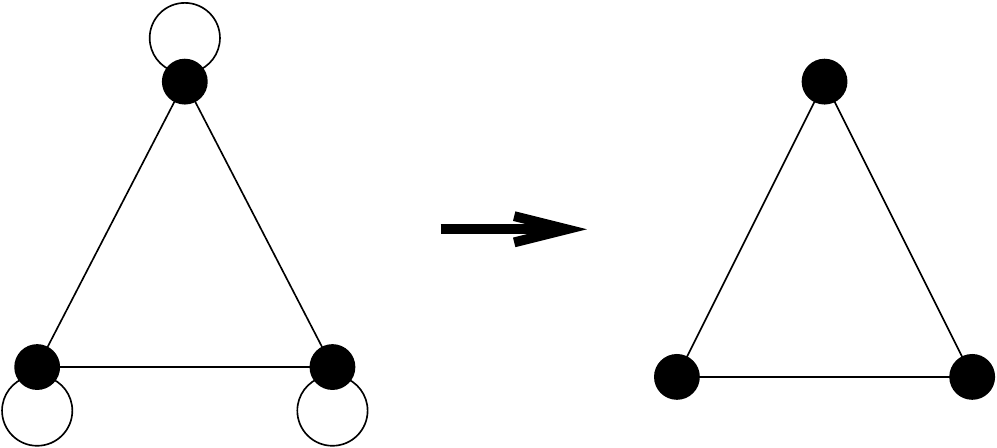}
        }
\put(-0.2,0.0){$1$}
\put(3.3,0.0){$1$}
\put(1.5,3.5){$1$}
\put(0.6,1.5){$0$}
\put(2.5,1.5){$0$}
\put(1.5,-0.2){$0$}
\put(5.8,1.5){$1$}
\put(7.7,1.5){$1$}
\put(6.7,-0.2){$1$}
\end{picture}
\caption{A switching of colour 3}
\label{f:switching3}
\end{center}
\end{figure}

Suppose that the multigraph $R$ can be produced from $Q$ using a switching
of colour 3.
The number of ways to perform a switching of colour 3 in $Q$ is at least
\[ [\ell_1(Q)]_3 - O\(\kmax  \ell_1(Q)^2\) = \ell_1(Q)^3(1-o(1)) \geq
  \nfrac{1}{2}\, M^{3/2},\]
while the number of ways that $R$ can be produced using a switching
of colour 3 is at most $\kmax M$.
Therefore we can take $a_3(Q) = \nfrac{1}{2} M^{3/2}$ and $b_3(R) = \kmax M$,
leading to
\[ \alpha(Q,R) = \frac{2\kmax}{M^{1/2}} = o(1).\]

\subsection{Switchings of colour 4}\label{ss:switching4}

A switching of colour 4 is used to reduce the number of non-loop edges
of multiplicity greater than $\max\{4,\lceil \kmax^{1/2}\rceil\}$.  
It is applied to multigraphs $Q$ for which switchings of colours
$1,2,3$ do not apply and such that
$E^+(Q) > \lceil 4\kmax^{1/2}\, M^{1/2}\rceil$, where
\begin{equation} 
E^+(Q) = \sum_{D= \max\{ 4,\,\lceil \kmax^{1/2}\rceil\}+1}^{\kmax}  e_D(Q).
\label{EQ-definition}
\end{equation}
A switching of colour 4 in $Q$ is described by a sequence
$(v_1,w_1,v_2,w_2)$ of distinct vertices such that

\vspace{-\topsep}
\begin{itemize} \itemsep=0pt
\item
 the multiplicity of edge $(v_1,w_1)$ in $Q$ is
$D_1$ and the multiplicity of $(v_2,w_2)$ in $Q$ is $D_2$,
where $D_1,D_2> \max\{4,\lceil \kmax^{1/2}\rceil\}$,
\item the edges $\{ v_1,v_2\}$ and $\{ w_1,w_2\}$ have multiplicity
zero in $Q$ (they are non-edges).
\end{itemize}
The switching reduces the multiplicity of edges $(v_1,w_1)$ and
$(v_2,w_2)$ by one and gives multiplicity 1 to edges
$\{ v_1,v_2\}$ and $\{w_1,w_2\}$, as shown in Figure~\ref{f:switching45}.

\begin{figure}[ht]
\begin{center}
\unitlength=0.9cm
\begin{picture}(8.5,4.0)(0,-0.4)
\put(0.0,0.0){
    \includegraphics[scale=0.72]{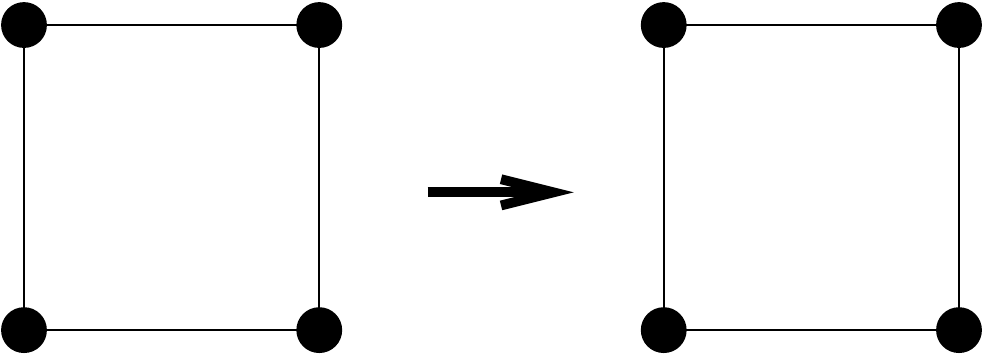}
        }
\put(1.3,3.0){$D_1$}
\put(-0.1,1.3){$0$}
\put(1.3,-0.35){$D_2$}
\put(2.9,1.3){$0$}
\put(6.1,3.0){$D_1-1$}
\put(5.1,1.3){$1$}
\put(6.1,-0.35){$D_2-1$}
\put(8.2,1.3){$1$}
\end{picture}
\caption{A switching of colour 4 or 5}
\label{f:switching45}
\end{center}
\end{figure}

Now suppose that the multigraph $R$ can be produced from $Q$ using a
switching of colour~4.  
The number of switchings of colour 4 that can be performed in $Q$
is  bounded below by
\[ 4 [E^+(Q)]_2 - O(\kmax^2\, E^+(Q) ) = 4E^+(Q)^2(1-o(1)) \geq 
            64\, \kmax\, M (1-o(1)) > 60\kmax\, M,\]
and the number of switchings of colour 4 that can produce $R$ is at most
\[ \frac{2 \kmax^2 M}{\max\{ 4,\, \lceil \kmax^{1/2}\rceil\}^2} 
          \leq 2 \kmax\, M.\]
Hence we can take $a_4(Q) = 60\,\kmax\, M$ and $b_4(R) = 2\kmax M$,
leading to
\[ \alpha(Q,R) = \dfrac{1}{30}.\]

\subsection{Switchings of colour 5}\label{ss:switching5}

A switching of colour 5 is used to reduce the number of non-loop
edges of multiplicity at least 5 and at most $\lceil \kmax^{1/2}\rceil$.
For a multigraph $Q\in \G_0$, let
\[ E^-(Q) = \sum_{D=5}^{\lceil\kmax^{1/2}\rceil} e_D(Q).\]
Switchings of colour 5 are applied to multigraphs $Q$ 
for which switchings of colours $1,\ldots, 4$  do not apply and such that
$E^-(Q) > \lceil 3 \kmax M^{1/2}\rceil$.

A switching of colour 5 is described by a sequence $(v_1,w_1,v_2,w_2)$
of distinct vertices such that 

\vspace{-\topsep}
\begin{itemize} \itemsep=0pt
\item the multiplicity of $\{ v_1,w_1\}$
in $Q$ is $D_1$ and the multiplicity of $\{ v_2,w_2\}$ in $Q$
is $D_2$, where
$D_1,D_2\in \{ 5,6,\ldots, \lceil \kmax^{1/2}\rceil\}$, and
\item the multiplicity of $\{ v_1,v_2\}$ and $\{ w_1,w_2\}$ in
$Q$ is zero (these are non-edges).
\end{itemize}
This switching is also illustrated by Figure~\ref{f:switching45},
but with different conditions on $D_1, D_2$ as above.

Suppose that $R$ can be produced from $Q$ using a switching of
colour 5.  The number of ways that a switching of colour 5 can be
performed in $Q$ is bounded below by
\[ 4[E^-(Q)]_2 - O(\kmax^2 E^-(Q)) = 4 E^-(Q)^2(1-o(1))
            \geq 30 \kmax^2 M,\]
and the number of ways to produce $R$ using a switching
of colour 5 is at most
\[ \kmax^2 M.\]
Hence we can let $a_5(Q) = 30\kmax^2 M$ and $b_5(R)=\kmax^2 M$,
leading to
\[ \alpha(Q,R) =  \dfrac{1}{30}.\]

\subsection{Switchings of colour 6, 7, 8}\label{ss:switching6}

A switching of colour $4+j$ is used to reduce the number of
edges of multiplicity $j$ to
at most $\lceil M^{5/6}\rceil$, for $j=2,3,4$.
Switchings of colour $4+j$ are applied to multigraphs $Q$
for which switchings of colours $1,\ldots, 3+j$  do not apply and
such that
\[ \quad e_j(Q) > \lceil M^{5/6}\rceil. \]
Given such a multigraph $Q$, a switching of colour $4+j$ is defined by
a sequence 
\[ (v_1,w_1,v_2,w_2,\ldots, v_j,w_j)\]
of distinct vertices such that

\vspace{-\topsep}
\begin{itemize} \itemsep=0pt
\item $(v_r, w_r)$ is an edge of multiplicity $j$
in $Q$ for $r=1,\ldots, j$,
\item every edge $\{ v_r, w_s\}$ with $1\leq r\neq s\leq j$ has
multiplicity 0 in $Q$ (it is a non-edge).
\end{itemize}
The switching deletes these $j$ edges of multiplicity $j$, 
and inserts a 
complete bipartite graph $K_{j,j}$ on 
$\{ v_1,\ldots, v_j\}\cup\{w_1,\ldots, w_j\}$, 
with all $j^2$ new edges simple (that is, multiplicity 1).  
This operation is illustrated in the Figure~\ref{f:switching8} for the case $j=4$.

\begin{figure}[ht]
\begin{center}
\unitlength=0.9cm
\begin{picture}(9,4.4)(0,0)
\put(0,0.0){
    \includegraphics[scale=0.72]{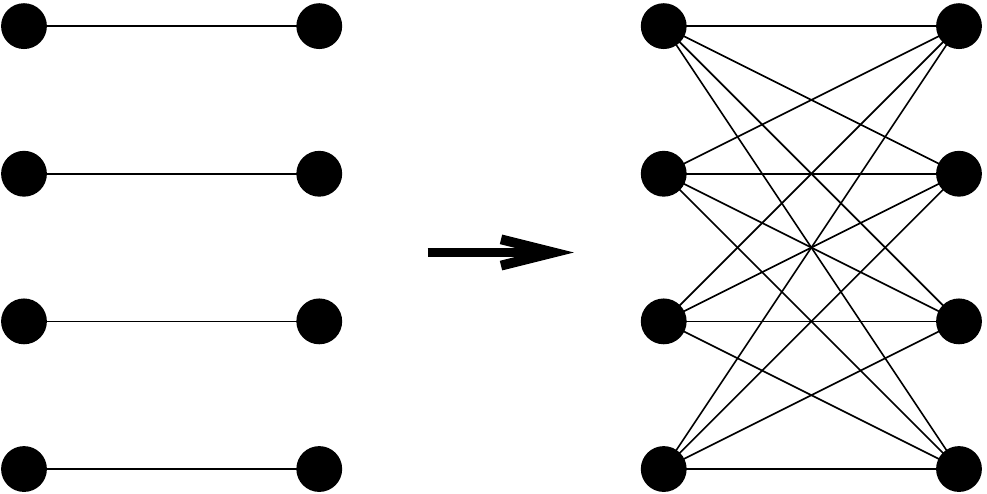}
        }
\put(1.4,0.4){$j$}
\put(1.4,2.8){$j$}
\put(1.4,4.0){$j$}
\put(1.4,1.6){$j$}
\end{picture}
\caption{A switching of colour 8}
\label{f:switching8}
\end{center}
\end{figure}

Suppose that the multigraph $R$ can be obtained from $Q$ using
a switching of colour $4+j$, for $j\in \{2,3,4\}$.
The number of ways in which a switching of colour $4+j$
can be performed in $Q$ is bounded below by
\[ 2^j\, [e_j(Q)]_j - O(\kmax^2\, e_j(Q)^{j-1}) \geq \nfrac{1}{2}  M^{5j/6},\]
while the number of switchings of colour $4+j$ which produce $R$
 is at most $\kmax^{2j-2} M$.
Hence we can define $a_{4+j}(Q) = \nfrac{1}{2} M^{5j/6}$ and
$b_{4+j}(R) = \kmax^{2j-2}\, M$ for $j\in\{2,3,4\}$.
Since $j\geq 2$, this leads to
\[
\alpha(Q,R) = \frac{2\kmax^{2j-2} M}{M^{5j/6}}
                  = O\left(\frac{\kmax^2}{M^{2/3}}\right) = o(1).\]

Before describing more coloured switchings, we prove a useful fact.

\begin{lemma}
\label{mostlyones}
Suppose that $Q\in\G_0$ is such that no switching of colour 1 to 8
applies to $Q$.  Then $e_1(Q)=(\tfrac12-o(1))M$.
\end{lemma}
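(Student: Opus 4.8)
\noindent\emph{Proof sketch (plan).} The plan is to combine the degree-sum identity with the observation that, since no switching of colour $1$ to $8$ is applicable, every one of the triggering thresholds in Sections~\ref{ss:switching1}--\ref{ss:switching6} must fail for $Q$. Write $\Lambda(Q)=\sum_{D\ge1}D\,\ell_D(Q)$ for the number of loops of $Q$ counted with multiplicity and $E(Q)=\sum_{D\ge1}D\,e_D(Q)$ for the number of non-loop edges counted with multiplicity. Summing the row-sum equation~\eqref{rowsum} over all vertices and using the symmetry of $A$ gives
\[
  \Lambda(Q)+E(Q)=\tfrac12 M .
\]
Since $e_1(Q)\le E(Q)\le\tfrac12 M$ is immediate, it suffices to prove the matching lower bound, which by the displayed identity reduces to showing that $\Lambda(Q)=o(M)$ and $E(Q)-e_1(Q)=\sum_{D\ge2}D\,e_D(Q)=o(M)$.

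For the loops I would use that a loop of multiplicity $D$ forces $2D\le\kmax$, together with the failure of the colour~$1$, $2$, $3$ thresholds, which says $\ell_2(Q)+\sum_{D\ge4}\ell_D(Q)\le\lceil 3M^{1/2}\rceil$, $\ell_3(Q)\le\lceil 3M^{1/2}\rceil$ and $\ell_1(Q)\le\lceil M^{1/2}\rceil$. Thus $\ell_1(Q),\ell_2(Q),\ell_3(Q)=O(M^{1/2})$ and $\sum_{D\ge4}D\,\ell_D(Q)\le\tfrac12\kmax\sum_{D\ge4}\ell_D(Q)=O(\kmax M^{1/2})$, whence $\Lambda(Q)=O(\kmax M^{1/2})=o(M)$ because $\kmax^3=o(M)$.

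For the edges of multiplicity at least $2$, an edge of multiplicity $D$ forces $D\le\kmax$. Failure of the colour~$6$, $7$, $8$ thresholds gives $e_j(Q)\le\lceil M^{5/6}\rceil$ for $j=2,3,4$, contributing $O(M^{5/6})$. Failure of the colour~$4$ and $5$ thresholds gives $E^+(Q)\le\lceil 4\kmax^{1/2}M^{1/2}\rceil$ and $E^-(Q)\le\lceil 3\kmax M^{1/2}\rceil$; one checks that the summation ranges defining $E^+(Q)$ and $E^-(Q)$ together cover every multiplicity $D\ge5$ (no matter how $\lceil\kmax^{1/2}\rceil$ compares with $4$), so the total contribution of all $D\ge5$ is at most $\kmax\,E^+(Q)+\lceil\kmax^{1/2}\rceil\,E^-(Q)=O(\kmax^{3/2}M^{1/2})$. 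Hence $\sum_{D\ge2}D\,e_D(Q)=O(M^{5/6}+\kmax^{3/2}M^{1/2})=o(M)$, once more using $\kmax^3=o(M)$. Substituting the two estimates into $\Lambda(Q)+E(Q)=\tfrac12 M$ yields $e_1(Q)=(\tfrac12-o(1))M$.

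The computation itself is routine; the only place needing care — and the nearest thing to an obstacle — is the bookkeeping in the last paragraph: one must verify that the union of the eight multiplicity ranges controlled by the switchings is exactly $\{D\ge2\}$ (in particular that there is no gap at $D=5,6,\dots$ between the ranges of $E^-$ and $E^+$ when $\kmax$ is small), and that replacing each threshold bound by its product with the largest multiplicity occurring in the corresponding range — namely $\kmax$, or $\tfrac12\kmax$ for loops, or $\lceil\kmax^{1/2}\rceil$ for the colour~$5$ range — still produces a quantity that is $o(M)$ under the hypothesis $\kmax^3=o(M)$.
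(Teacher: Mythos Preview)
Your proposal is correct and follows essentially the same approach as the paper's proof: bound $\sum_{D\ge1}D\,\ell_D(Q)$ using the failure of the colour~$1,2,3$ thresholds, bound $\sum_{D\ge2}D\,e_D(Q)$ using the failure of the colour~$4$--$8$ thresholds (splitting the range $D\ge5$ between $E^+$ and $E^-$ exactly as you do), and then read off $e_1(Q)$ from the degree-sum identity. The paper's proof is more terse---it leaves the identity $\Lambda(Q)+E(Q)=\tfrac12 M$ implicit in the phrase ``The result follows''---but the content is the same, including the bound $9\lceil M^{5/6}\rceil + \kmax E^+(Q) + \lceil\kmax^{1/2}\rceil E^-(Q)=O(M^{5/6}+\kmax^{3/2}M^{1/2})$.
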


\begin{proof}
Since no switching of colour 1, 2 or 3 applies we have
\[ \sum_{D\geq 1} D\,\ell_D(Q) = O(\kmax\, M^{1/2}) = O(M^{5/6}),\]
and since no switching of colour 4--8 applies we know that
\begin{align*}
 \sum_{D\geq 2} D\, e_D(Q) &\leq 9\lceil M^{5/6}\rceil +
   \kmax\, E^+(Q) + \lceil \kmax^{1/2}\rceil \, E^-(Q)\\
  &= O(M^{5/6} + \kmax^{3/2}\,M^{1/2})\\
  &= o(M).
\end{align*}
The result follows.  
\end{proof}

\subsection{Switchings of colour 9}\label{ss:switching9}

Switchings of colour 9 reduce the number of loops of
multiplicity 2 or multiplicity at least~4, until this number is zero.
They are 
applied to multigraphs $Q$ for which the switchings of
colours $1,\ldots, 8$ do not apply and such that $L(Q) \geq 1$.  

Let $Q$ be such a multigraph.  
A switching of colour 9 in $Q$ is defined by a sequence 
$(v_0,v_1,w_1,v_2,w_2)$ of distinct vertices such that

\vspace{-\topsep}
\begin{itemize} \itemsep=0pt
\item there is a loop at vertex $v_0$ in $Q$ with multiplicity $D$,
where $D=2$ or $D\geq 4$, 
\item $\{ v_1,w_1\}$ and $\{ v_2,w_2\}$ are simple edges (of multiplicity 1),
\item $\{ v_0,v_1\}$, $\{ v_0,v_2\}$, $\{ v_0,w_1\}$, $\{v_0,w_2\}$ are
all non-edges in $Q$ (with multiplicity zero).
\end{itemize}
The switching reduces the multiplicity of the loop at $v_0$
to $D-2$,
removes the two simple edges $\{v_1,w_1\}$
and $\{v_2,w_2\}$ and inserts the four simple edges
$\{ v_0,v_1\}$, $\{v_0,v_2\}$, $\{v_0,w_1\}$, $\{v_0,w_2\}$,
as shown in Figure~\ref{f:switching9}.

\begin{figure}[ht]
\begin{center}
\unitlength=0.9cm
\begin{picture}(8,3.4)(0,0)
\put(0,0.0){
    \includegraphics[scale=0.72]{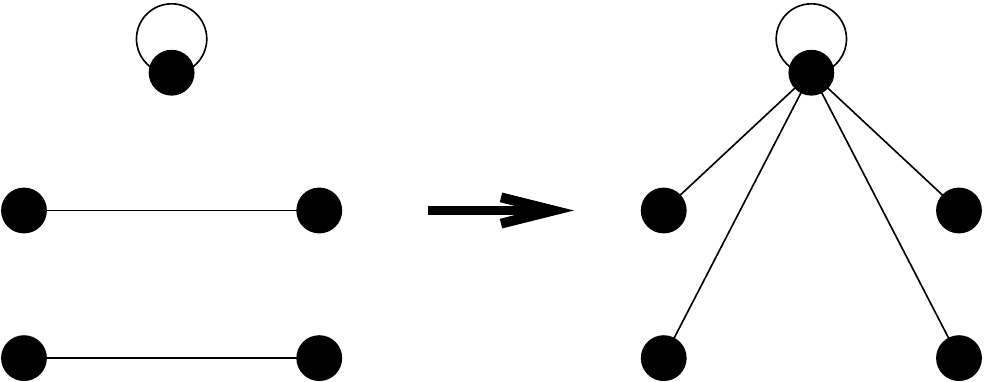}
        }
\put(1.4,3.3){$D$}
\put(6.2,3.3){$D-2$}
\end{picture}
\caption{A switching of colour 9}
\label{f:switching9}
\end{center}
\end{figure}

Suppose that the multigraph $R$ can be produced from $Q$ by a switching
of colour~9.  
The number of ways to perform a switching of colour 9 in $Q$ is at least
\[   L(Q)\left([2e_1(Q)]_2 - O(\kmax^2e_1(Q))\right) \geq \nfrac{1}{2}\, L(Q)M^2,\]
using Lemma~\ref{mostlyones},
while the number of ways to produce $R$ using a switching of
colour 9 is at most $M_4$.  (We ignore the presence of the loop at
$v_0$ in this upper bound, since no such loop exists when $D=2$.)

Hence we can let $a_9(Q) =\nfrac{1}{2} L(Q)M^2$ and 
$b_9(R) = M_4$, leading to 
\[ \alpha(Q,R) = \frac{2M_4}{L(Q)\, M^2} 
    = O\left(\frac{\kmax^3}{M} \right)\]
since $L(Q)\geq 1$.

\subsection{Switchings of colour 10}\label{ss:switching10}

Switchings of colour 10 reduce the number of loops of
multiplicity 3, until this number is zero.
They are applied to multigraphs $Q$ such that 
switchings of colours $1,\ldots, 9$ do not apply and such that 
$\ell_3(Q) \geq 1$.

Let $Q$ be such a multigraph.  A switching of colour 10 
is defined by a sequence of
distinct vertices $(v_0,v_1,w_1,v_2,w_2,v_3,w_3)$ such that

\vspace{-\topsep}
\begin{itemize} \itemsep=0pt
\item there is a loop of multiplicity 3 at $v_0$ in $Q$,
\item there is a simple edge $\{ v_j, w_j\}$ in $Q$, for $j=1,2,3$,
\item the edges $\{ v_0,v_j\}$ and $\{v_0,w_j\}$ all have
multiplicity 0 in $Q$, for $j=1,2,3$.
\end{itemize}
The switching removes the loop of multiplicity 3 and the simple
edges $\{ v_j,w_j\}$ for $j=1,2,3$, and adds the six simple
edges $\{ \{ v_0,v_j\},\, \{v_0,w_j\} \mid  j=1,2,3\}$,
as shown in Figure~\ref{f:switching10}.

\begin{figure}[ht]
\begin{center}
\unitlength=0.9cm
\begin{picture}(8,4.7)(0,0)
\put(0,0.0){
    \includegraphics[scale=0.72]{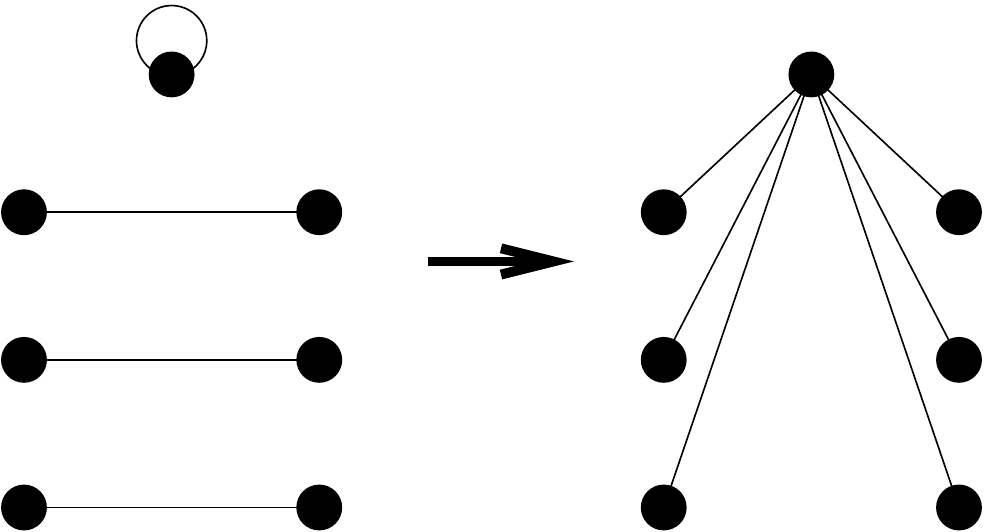}
        }
\put(1.4,4.5){$3$}
\end{picture}
\caption{A switching of colour 10}
\label{f:switching10}
\end{center}
\end{figure}

Now let $R$ be a multigraph which can be formed from $Q$ by a switching
of colour 10.  The number of ways that a switching of colour 10 can
be performed in $Q$ is at least
\[ \ell_3(Q)\, \left( [2e_1(Q)]_3 - O(\kmax^2 e_1(Q)^2)\right) \geq \nfrac{1}{2}\,  \ell_3(Q) M^3,\]
using Lemma~\ref{mostlyones},
and the number of ways that $R$ can be produced using a switching of
colour 10 is bounded above by $M_6$.  Hence we can take
$a_{10}(Q) = \nfrac{1}{2}\, \ell_3(Q)\, M^3$ and $b_{10}(R) = M_6$.
This leads to
\[ \alpha(Q,R) = \frac{2\, M_6}{\ell_3(Q)\, M^3}
              = O\left(\frac{\kmax^5}{M^2}\right) = o\left(\frac{\kmax^3}{M}
  \right).
\]

We do not tackle single loops yet.  Instead, the next two
switchings reduce non-loop edges of high multiplicity down to zero.

\subsection{Switchings of colour 11}\label{ss:switching11}

Switchings of colour 11 reduce the number of non-loop edges
with multiplicity 4 or multiplicity at least 7, until this number is
zero.  Switchings of colour 11 are applied to multigraphs $Q$ for which
switchings of colours $1,\ldots, 10$ do not apply and
$E(Q) \geq 1$, where
\[ E(Q) = e_4(Q) + \sum_{D\geq 7} e_D(Q).\]
Let $Q$ be such a multigraph.  A switching of colour 11 in $Q$ is
defined by a sequence 
\[ (v_0,w_0,v_1,w_1,v_2,w_2,v_3,w_3)\]
of distinct vertices such that

\vspace{-\topsep}
\begin{itemize} \itemsep=0pt
\item the edge $\{ v_0,w_0\}$ has multiplicity $D$ in $Q$,
where $D=4$ or $D\geq 7$,
\item edges $\{ v_j, w_j\}$ are simple edges in $Q$
(with multiplicity 1) for $j=1,2,3$,
\item the edges $\{ v_0,v_j\}$ and $\{w_0,w_j\}$ are non-edges
in $Q$
for $j=1,2,3$ (with multiplicity zero).
\end{itemize}
The switching reduces the multiplicity of $\{ v_0,w_0\}$
to $D-3$, removes the edges $\{ v_j,w_j\}$ for $j=1,2,3$
and adds the simple edges $\{ v_0,v_j\}$ and $\{ w_0,w_j\}$
for $j=1,2,3$.  This operation is illustrated in Figure~\ref{f:switching11}.

\begin{figure}[ht]
\begin{center}
\unitlength=0.9cm
\begin{picture}(9,4.7)(0,0)
\put(0,0.0){
    \includegraphics[scale=0.72]{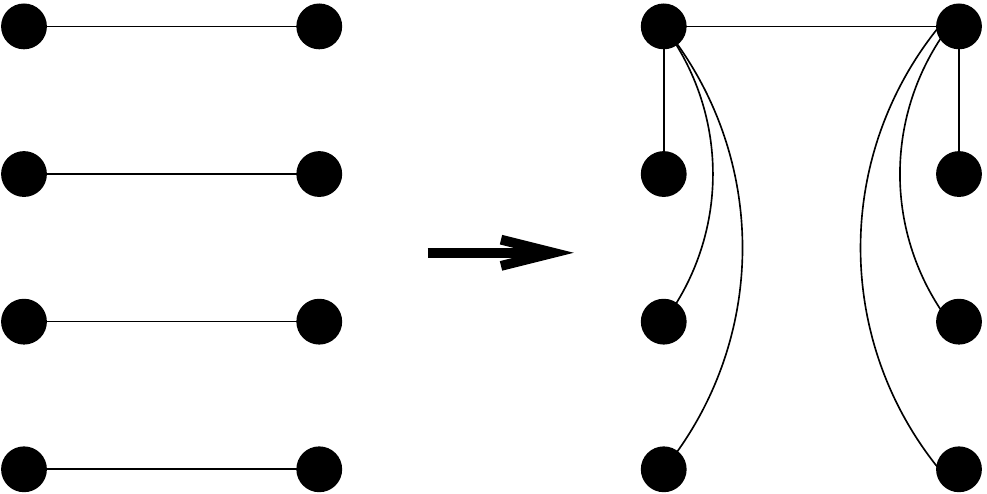}
        }
\put(1.3,4.1){$D$}
\put(6.2,4.1){$D-3$}
\end{picture}
\caption{A switching of colour 11}
\label{f:switching11}
\end{center}
\end{figure}

Suppose that the multigraph $R$ can be obtained from $Q$ by
performing a switching of colour 11.  The number of ways
to perform a switching of colour 11 in $Q$ is at
least
\[ 2 E(Q)\,\left( [2e_1(Q)]_3 - O(\kmax^2 e_1(Q)^2)\right) \geq  E(Q)\, M^3,\]
using Lemma~\ref{mostlyones}, and the number of ways to produce $R$ using a switching
of colour 11 is at most $\kmax^3 M_4$.  Hence we
can let $a_{11}(Q) =  E(Q)\, M^3$ and
$b_{11}(R) = \kmax^3 M_4$. This leads to
\[ \alpha(Q,R) = \frac{\kmax^3 M_4}{E(Q) M^3}
   = O\left(\frac{\kmax^6}{M^{2}}\right)
   =  o\left(\frac{\kmax^3}{M}\right).
\]

\subsection{Switchings of colour 12}\label{ss:switching12}

Switchings of colour 12 reduce the number of non-loop edges
with multiplicity 5 or 6, until this number is zero.
They are applied to multigraphs $Q$ such that switchings of
colours $1,\ldots, 11$ do not apply and $e_5(Q)+e_6(Q) \geq 1$.

Let $Q$ be such a multigraph.  Then a switching of colour 12
is defined by a sequence of distinct vertices
$(v_0,w_0,v_1,w_1,v_2,w_2,v_3,w_3,v_4,w_4,v_5,w_5)$
such that

\vspace{-\topsep}
\begin{itemize} \itemsep=0pt
\item the edge $\{ v_0,w_0\}$ has multiplicity $D$ in $Q$,
where $D\in \{ 5,6\}$,
\item each edge $\{ v_j,w_j\}$ is a simple edge in $Q$,
with multiplicity 1, for $j=1,2,3,4,5$,
\item the edges $\{ v_0,v_j\}$ and $\{ w_0,w_j\}$ all have
multiplicity 0 in $Q$ for $j=1,2,3,4,5$ (that is, they are
non-edges).
\end{itemize}
The switching reduces the multiplicity of the edge $\{ v_0,w_0\}$
to $D-5$, removes the edges $\{ v_j,w_j\}$, $j=1,2,3,4,5$,
and inserts the simple edges $\{ v_0,v_j\}$, $\{ v_0,w_j\}$
for $j=1,2,3,4,5$, as shown in Figure~\ref{f:switching12}.

\begin{figure}[ht]
\begin{center}
\unitlength=0.9cm
\begin{picture}(9,7.1)(0,0)
\put(0,0.0){
    \includegraphics[scale=0.72]{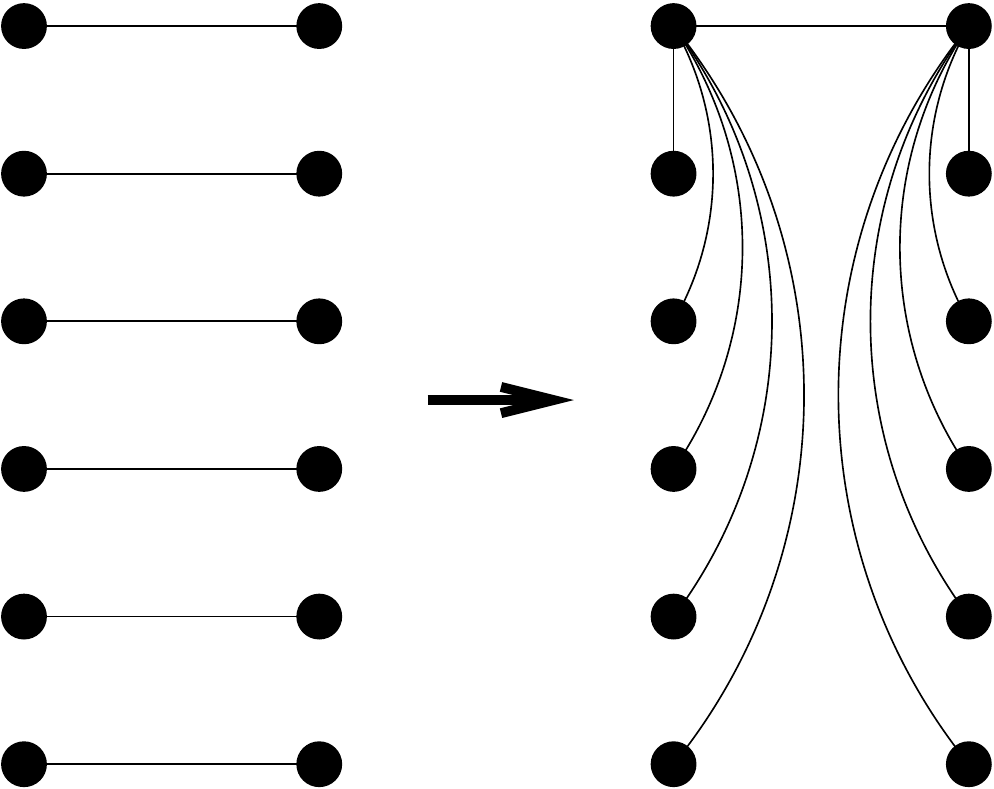}
        }
\put(1.3,6.5){$D$}
\put(6.3,6.5){$D-5$}
\end{picture}
\caption{A switching of colour 12}
\label{f:switching12}
\end{center}
\end{figure}

Suppose that the multigraph $R$ can be formed from $Q$ by
a switching of colour 12.  The number of ways that a switching
of colour 12 can be performed in $Q$ is at least
\[ 2(e_5(Q) + e_6(Q))\,\left([ 2 e_1(Q)]_5 - O(\kmax^2 e_1(Q)^4)\right) \geq 
         (e_5(Q) + e_6(Q))\, M^5,
\]
using Lemma~\ref{mostlyones},
and the number of ways that $R$ can be produced by a switching
of colour 12 is at most $M_5^2$.  Hence we can set
$a_{12}(Q) = (e_5(Q) + e_6(Q)) M^5$ and
$b_{12}(R) = M_5^2$.  This gives
\[ \alpha(Q,R) = \frac{M_5^2}{(e_5(Q)+e_6(Q)) M^5}
                  = O\left(\frac{\kmax^8}{M^3}\right)
                  = o\left(\frac{\kmax^3}{M}\right).
\]

\subsection{Switchings of colour 13}\label{ss:switching13}

Switchings of colour 13 are used to reduce the number of simple
loops until this number is at most $\lceil N_1/2\rceil$.
They are applied to multigraphs $Q$ for which switchings
of colours $1,\ldots, 12$ do not apply and
$\ell_1(Q) > \lceil N_1/2\rceil$. 

Let $Q$ be such a multigraph.  Then a switching of colour 13
is defined by a sequence of distinct vertices $(v_0,v_1,v_2)$
such that there is a simple loop on $v_0$ in $Q$, the
edge $\{v_1,v_2\}$ is a simple edge in $Q$, and the edges
$\{ v_0,v_1\}$, $\{ v_0,v_2\}$ are both absent in $Q$
(that is, they have multiplicity zero).  The switching
removes the simple loop at $v_0$ and the simple edge
$\{ v_1,v_2\}$ and inserts the two simple edges
$\{ v_0,v_1\}$, $\{ v_0,v_2\}$,
as shown in Figure~\ref{f:switching13}.

\begin{figure}[ht]
\begin{center}
\unitlength=0.9cm
\begin{picture}(9,2.1)(0,0)
\put(0,0.0){
    \includegraphics[scale=0.72]{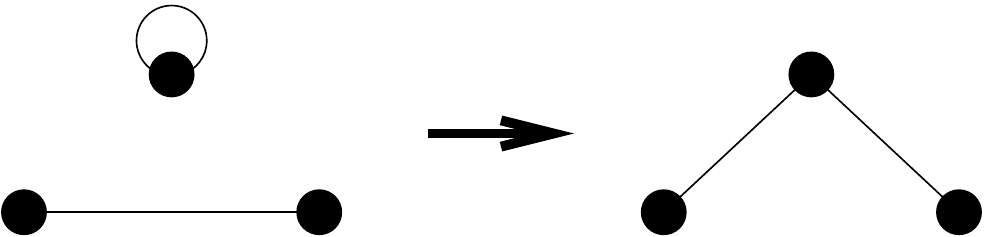}
        }
\end{picture}
\caption{A switching of colour 13}
\label{f:switching13}
\end{center}
\end{figure}

Suppose that the multigraph $R$ can be obtained from $Q$ by performing
a switching of colour 13.  The number of ways that a switching of
colour 13 can be performed in $Q$ is at least 
\[ \ell_1(Q)\left(2e_1(Q) - O(\kmax^2)\right) \geq \nfrac{1}{2}\ell_1(Q) M,\]
using Lemma~\ref{mostlyones},
and the number of ways that a switching of colour 13 can produce $R$
is at most $M_2$.  Therefore we can define
$a_{13}(Q) = \nfrac{1}{2}\ell_1(Q)M$ and $b_{13}(R) = M_2$,
leading to
\[ \alpha(Q,R) = \frac{2\,M_2}{\ell_1(Q) M}
                   \leq \frac{4\, M_2}{N_1 M}
                  \leq \frac{1}{120}, 
\]
using the definition of $N_1$.

\subsection{Switchings of colour 14}\label{ss:switching15}

Switchings of colour 14 reduce the number of non-loop edges
with multiplicity 2 until this number is at most $\lceil N_2/2\rceil$.
This switching is applied to multigraphs $Q$ such that
switchings of colours $1,\ldots, 13$ do not apply and
$e_2(Q) > \lceil N_2/2\rceil$.

Let $Q$ be such a multigraph.  A switching of colour 14 in $Q$
is described by a sequence $(v_0,w_0,v_1,w_1,v_2,w_2)$ of distinct
vertices such that 

\vspace{-\topsep}
\begin{itemize} \itemsep=0pt
\item $\{ v_0,w_0\}$ is an edge of multiplicity 2 in $Q$,
\item $\{ v_1,w_1\}$ and $\{ v_2,w_2\}$ are simple edges in $Q$ 
          (with multiplicity 1),
\item none of the edges $\{ v_0,v_j\}$ or $\{w_0,w_j\}$ are present
in $Q$, for $j=1, 2$ (these are all non-edges, with multiplicity
zero).
\end{itemize}
The switching removes these $3$ edges and replaces them with two
copies of $K_{1,2}$, one on $\{ v_0,v_1,v_2\}$ centred
at $v_0$ and the other on $\{ w_0,w_1,w_2\}$ centred at
$w_0$.
This operation is shown in Figure~\ref{f:switching14}. 

\begin{figure}[ht]
\begin{center}
\unitlength=0.9cm
\begin{picture}(9,3)(0,0)
\put(0,0.0){
    \includegraphics[scale=0.72]{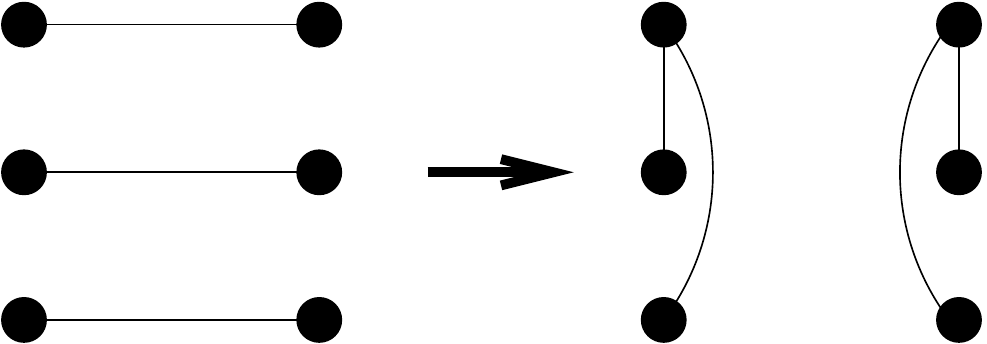}
        }
\put(1.4,2.85){$2$}
\end{picture}
\caption{A switching of colour 14}
\label{f:switching14}
\end{center}
\end{figure}

Suppose that the multigraph $R$ can be produced from $Q$
using a switching of colour 14.  The number of ways to perform
a switching of colour 14 in $Q$ is at least
\[ 2e_2(Q) \left( [2e_1(Q)]_2 - O(\kmax^2 e_1(Q))\right) \geq  e_2(Q) M^2,\]
using Lemma~\ref{mostlyones},
and the number of ways that $R$ can be produced using a
switching of colour 14 is at most $M_2^2$.  Therefore we can
take $a_{14}(Q) =  e_2(Q) M^2$ and $b_{14}(R) = M_2^2$,
leading to
\[ \alpha(Q,R) = \frac{M_2^2}{e_2(Q) M^2}
           < \frac{2M_2^2}{N_2\, M^2} \leq  \frac{1}{120},\]
by the definition of $N_2$.

\subsection{Switchings of colour 15}\label{ss:switching14}

Switchings of colour 15 are used to reduce the number of non-loop
edges with multiplicity 3 until this number is at most
$\lceil N_3/2\rceil$.
They are applied to multigraphs $Q$ such that
switchings of colours $1,\ldots, 14$ do not apply and
$e_3(Q) > \lceil N_3/2\rceil$. 

Let $Q$ be such a multigraph.  Then a switching of colour 15
is defined by a sequence of distinct vertices 
$(v_0,w_0,v_1,w_1,v_2,w_2,v_3,w_3)$
such that

\vspace{-\topsep}
\begin{itemize} \itemsep=0pt
\item $\{ v_0,w_0\}$ has multiplicity 3 in $Q$,
\item each of $\{ v_j,w_j\}$ is a simple edge in $Q$, for $j=1,2,3$,
\item the edges $\{ v_0,v_j\}$ and $\{ w_0,w_j\}$ are absent
in $Q$ for $j=1,2,3$.
\end{itemize}
The switching removes the edges $\{ v_j,w_j\}$ for $j=0,1,2,3$
(setting the multiplicity of each to zero) and inserts the simple
edges $\{ v_0,v_j\}$, $\{ w_0,w_j\}$ for $j=1,2,3$,
as illustrated in Figure~\ref{f:switching11} for the
case that $D=3$.

Suppose that the multigraph $R$ can be produced by performing
a switching of colour 15 from~$Q$.  The number of ways to perform a
switching of colour 15 in $Q$ is at least
\[ 2 e_3(Q)\,\left( [2 e_1(Q)]_3 - O(\kmax^2 e_1(Q)^2)\right) \geq  e_3(Q) M^3,\]
using Lemma~\ref{mostlyones},
while the number of ways that $R$ could be produced using a
switching of colour 15 is at most $M_3^2$.  Therefore we
can take $a_{15}(Q) = e_3(Q) M^3$ and
$b_{15}(R) = M_3^2$, leading to
\[ \alpha(Q,R) = 
   \frac{M_3^2}{ e_3(Q) M^3}
   \leq \frac{2\,M_3^2}{N_3\, M^3}
                  \leq  \frac{1}{120},\]
by the definition of $N_3$.

\subsection{Analysis of the switchings}\label{ss:combine}

We now explain how to apply Theorem~\ref{switchings} to analyse
these switchings on $\G_0$.  (See the statement of Theorem~\ref{switchings}
for the necessary notation.)

We now define a directed graph $\varGamma$ with vertex set 
$V(\varGamma) = \{ v_Q : Q\in \G_0\}$,
where $v_Q$ is associated with the set $\class(v_Q) = \{ Q\}$ 
containing one object.  These sets are certainly disjoint.
By a slight abuse of notation we will identify $v_Q$ and $Q$ from
now on, and write $Q$ for both.   The edge set of $\varGamma$ is defined as
follows: there is an edge $QR$ in $\varGamma$
with colour $c\in\{ 1,\ldots, 15\}$ if and only if $R$ can be obtained 
from $Q$ using a switching of colour~$c$.  
Since a switching is used only where no switching with a lower-labelled
colour applies, there is at most one edge from $Q$ to $R$ in $\varGamma$. 
Hence the endvertices of an edge uniquely determine the colour of the edge.
We take $\lambda_c(v)=\tfrac1{15}$ for all $c,v$, and so
for each edge $QR$ in $\varGamma$ we have
\[ \hat\alpha(QR) = \frac{\alpha(QR)}{\lambda_{c(QR)}(R)}
                  =15\, \alpha(QR).
\]
Let $Y, Z$ be as defined in (\ref{Ydef}), (\ref{Zdef}).

\begin{lemma}
With notation as established above, we have
\[ G(\kvec,\J,\J^*) = \(1 + O(\kmax^3/M)\)\, |\G_0 - Y|.\]
\label{switchinganalysis}
\end{lemma}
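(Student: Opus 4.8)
The plan is to apply Theorem~\ref{switchings} to the coloured digraph $\varGamma$ and weighting $\alpha$ constructed above, with colour set $\{1,\dots,15\}$ and $\lambda_c(Q)\equiv\tfrac1{15}$, so that $\hat\alpha(QR)=15\,\alpha(QR)$ on every edge; since each class $\class(Q)=\{Q\}$ is a singleton we have $N(Q)=1$ for all $Q$, and \eqref{pathineq} becomes $|Y|\le\frac{\mu}{1-\nu}\,|Z|$, where $\mu=\max_{P\in\P(Y,Z)}\hat\alpha(P)$ and $\nu=\max_{P\in\P(Y,Y)}\hat\alpha(P)$. First I would verify the hypotheses. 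The first one is immediate: $Y\cap Z=\emptyset$ since $Z\subseteq\G_0-Y$ by \eqref{Ydef}--\eqref{Zdef}, while $Z\ne\emptyset$ because under $\kmax^3=o(M)$ and $M\to\infty$ the sequence $\kvec$ is the degree sequence of a simple graph once $n$ is large, and every such graph lies in $Z$. For the second, the values of $\alpha$ computed above give $\hat\alpha(QR)=15\,\alpha(QR)\le 15\cdot\tfrac1{30}=\tfrac12<1$ on every edge (colours~4 and~5 attain $\tfrac12$, the others give $\le\tfrac1{36}$, $\le\tfrac1{120}$, or $o(1)$), so the alternative $\hat\alpha(QR)\ge1$ is never met; and if $Q$ is a sink then, reading the trigger conditions of colours $1,\dots,15$ in turn, $L(Q)=0$, $\ell_3(Q)=0$, $e_D(Q)=0$ for $D\ge4$, $\ell_1(Q)\le\lceil N_1/2\rceil$, $e_2(Q)\le\lceil N_2/2\rceil$ and $e_3(Q)\le\lceil N_3/2\rceil$, i.e.\ $Q\in Z$ by \eqref{Zdef}. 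Hence \eqref{pathineq} holds.

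Bounding $\nu$ is immediate: every edge has $\hat\alpha\le\tfrac12$, so every non-trivial path $P$ has $\hat\alpha(P)\le\tfrac12$, giving $1-\nu\ge\tfrac12$. The substance is to show $\mu=O(\kmax^3/M)$, and this is where $\kmax^3=o(M)$ is used. For $c\le8$, a colour-$c$ switching is applied to $Q$ only when $Q$ already carries more than some threshold of loops (or edges) of a prescribed type, and one application decreases that count by a bounded amount, leaving the result still in $Y$ --- the threshold being $\ge1$ for colours $1,2,4,5,8$, and for colours $3,6,7$ of strictly larger order than $N_1,N_2,N_3$ under $\kmax^3=o(M)$. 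So no edge of colour $\le8$ can occur in a path $P\in\P(Y,Z)$, whose internal vertices avoid $Y$; and the vertex reached after the first edge of $P$ --- being internal, or else the endpoint in $Z$ --- has no loop of multiplicity $\ge2$, no non-loop edge of multiplicity $\ge4$, and $\ell_1\le N_1<\lceil M^{1/2}\rceil$, $e_2\le N_2<\lceil M^{5/6}\rceil$, $e_3\le N_3<\lceil M^{5/6}\rceil$, so from there on only colours $13,14,15$ can be applied. Now split on the colour of the first edge of $P$. If it is $9,10,11$, or $12$, then, using $M_r\le\kmax^{r-1}M$ together with $L\ge1$, $\ell_3\ge1$, $E\ge1$, or $e_5+e_6\ge1$ at its source, that edge alone has $\hat\alpha=O(\kmax^3/M)$, while every other edge has $\hat\alpha\le1$, so $\hat\alpha(P)=O(\kmax^3/M)$. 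If it is $13,14$, or $15$, then --- since these colours decrease $\ell_1,e_2,e_3$ respectively by $1$ per step, are used in that order, and leave one another's counts unchanged --- $P$ must drive one of $\ell_1,e_2,e_3$ from exactly $N_i+1$ (forced, since the first edge of $P$ leaves $Y$ and no other type of defect is available) down to at most $\lceil N_i/2\rceil$; hence $P$ has length $\ge\lfloor N_i/2\rfloor\ge\lfloor\lceil\log M\rceil/2\rfloor$, and as each of its edges has $\hat\alpha=\tfrac18$ we get $\hat\alpha(P)\le 8\,M^{-(\log 8)/2}$, which (since $\log 8>2$) is $o(1/M)=O(\kmax^3/M)$. (The constant $\lceil\log M\rceil$ in \eqref{ndefs} was not optimised and can be enlarged if necessary.) Maximising over $P$ yields $\mu=O(\kmax^3/M)$.

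Putting the two bounds together, \eqref{pathineq} gives $|Y|\le 2\mu\,|Z|=O(\kmax^3/M)\,|Z|\le O(\kmax^3/M)\,|\G_0-Y|$, using $Z\subseteq\G_0-Y$. Also, because $0\in\J^*$ and $0,1\in\J$, any $Q\in\G_0-Y$ has all loop multiplicities in $\{0,1\}\cap\J^*$ and all non-loop multiplicities in $\{0,1,2,3\}\cap\J$ (it has none of multiplicity $\ge2$, resp.\ $\ge4$), so $Q\in\G(\kvec,\J,\J^*)$; hence $\G_0-Y\subseteq\G(\kvec,\J,\J^*)\subseteq\G_0$. Therefore $G(\kvec,\J,\J^*)=|\G_0-Y|+|\G(\kvec,\J,\J^*)\cap Y|\le|\G_0-Y|+|Y|=(1+O(\kmax^3/M))\,|\G_0-Y|$, while $G(\kvec,\J,\J^*)\ge|\G_0-Y|>0$; this gives the claimed estimate.

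The step I expect to be the main obstacle is the middle paragraph --- proving that every directed path from $Y$ to $Z$ has weight $O(\kmax^3/M)$. It rests on the bookkeeping of which switchings can follow one another along a path that begins in $Y$ and stays outside $Y$ until it reaches $Z$: that colours $1$--$8$ never leave $Y$ in a single step, that thereafter only colours $13$--$15$ remain available, and the resulting dichotomy between paths that spend one cheap colour-$9$--$12$ step and paths forced to have length $\Omega(\log M)$.
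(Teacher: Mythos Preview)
Your proposal is correct and follows essentially the same approach as the paper: verify the hypotheses of Theorem~\ref{switchings} with $\lambda_c\equiv\tfrac1{15}$, note $\hat\alpha\le\tfrac12$ on every edge so that $1-\nu\ge\tfrac12$, and then bound $\mu=\max_{P\in\P(Y,Z)}\hat\alpha(P)$ by the same dichotomy on the colour of the first edge (colours $1$--$8$ never exit $Y$; colours $9$--$12$ contribute $O(\kmax^3/M)$ on a single edge; colours $13$--$15$ force $\ge\lfloor N_r/2\rfloor$ edges each of weight $\le\tfrac18$). The only differences are cosmetic: you check $Z\ne\emptyset$ explicitly and bound $|Y|$ against $|\G_0-Y|$ via $Z\subseteq\G_0-Y$, whereas the paper bounds against $|\G_0|$ and then sandwiches.
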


\begin{proof}
It follows from the analysis of the previous sections that
$\alpha(QR) \leq \nfrac{1}{30}$, and therefore
$\hat\alpha(QR) \leq \nfrac{1}{2}$, for all edges $QR$ in $\varGamma$.
Moreover, we have provided at least one switching which can be performed
from $Q$, for each 
graph $Q$ in $\G_0-Z$.
Hence $Y$ and $Z$ satisfy the requirements of
Theorem~\ref{switchings} and we can conclude that
\[ |Y| \leq \frac{\hat{\alpha}(Y,Z)}{1-\hat\alpha(Y,Y)} \, |Z| \leq
\frac{\hat{\alpha}(Y,Z)}{1-\hat\alpha(Y,Y)} \, |\G_0|,\]
where
\[ \hat\alpha(W,W') = \max_{P\in\mathcal{P}(W,W')} \hat\alpha(P)\]
for all $W,W'\subseteq \G_0$.

Since $\mathcal{P}(Y,Y)$ by definition has only non-trivial paths,
and $\hat\alpha(QR) \leq \nfrac{1}{2}$ for all edges in~$\varGamma$,
we know that $\hat\alpha(Y,Y) \leq \nfrac{1}{2}$, and therefore
\[ |Y| \leq 2\,\hat\alpha(Y,Z)\, |\G_0|. \]

Now let $P$ be a path in $\varGamma$ from some $Q\in Y$ to some element 
of $Z$, such that all internal vertices of $P$ belong to  $\G_0-(Y\cup Z)$.
Let $QR$ be the first edge in $P$.  Then $QR$ cannot have colour
in $\{1,\ldots, 8\}$, since these switchings only produce graphs in~$Y$.  
If $QR$ is coloured with a colour in $\{ 9, 10, 11, 12\}$ then
$\hat\alpha(QR)=O(\kmax^3/M)$,
so $\hat\alpha(P)=O(\kmax^3/M)$.

The remaining possibility is that $QR$ has colour $12+r$ for
some $r\in\{1,2,3\}$, in which case $P$ must contain at least
$\lfloor N_r/2\rfloor$ edges of colour $12+r$.  Our analysis
showed that $\alpha(e) \leq \nfrac{1}{120}$ and hence that
$\hat\alpha(e) \leq \nfrac{1}{8}$, for all such edges~$e$.
By definition $\lfloor N_r/2\rfloor\ge\nfrac12(\log M-1)$ for 
$r\in\{1,2,3\}$.
Therefore,
\[ \hat\alpha(P) 
  \leq  8^{-(\log M - 1)/2}    = O(M^{-1}), \]
which implies that $\hat\alpha(Y,Z) = O(\kmax^3/M)$.  We conclude that
\[ |\G_0 - Y| = \(1 + O(\kmax^3/M)\) \card{\G_0}. \]
But $\G_0 - Y \subseteq \G(\kvec,\J,\J^*)\subseteq \G_0$, and hence
\[ G(\kvec,\J,\J^*) = \(1 + O(\kmax^3/M)\) \card{\G_0-Y}, \]
completing the proof.
\end{proof}

It remains to obtain an asymptotic expression for $\G_0 - Y$,
which we do in the next section.

\section{From pairings to multigraphs}\label{s:pairings}

In this section we work in the \emph{pairing model} (also
called \emph{configuration model}), which we now describe.
This model is standard for working with random graphs of fixed
degrees: see for example~\cite{McKay84}.
Consider a set of $M$ \emph{points} partitioned into 
\emph{cells}
$c_1,\ldots, c_n$, where cell $c_i$ contains $k_i$ points 
for $i=1,2,\ldots, n$.
Take a partition $P$ (called a \emph{pairing}) of the $M$ points
into $M/2$ \emph{pairs} with each pair having the form $\{ y,z\}$
where $y\in c_i$ and $z\in c_j$ for some $i,j$.  The set of all
such pairings, of which there are $M!/(M/2)!\,2^{M/2}$, will
be denoted by $\pairings(\kvec)$.

A \emph{loop} is a pair whose two points lie in the same cell,
while a \emph{link} is a pair involving two distinct cells.
Two pairs are \emph{parallel} if they involve the same cells.
A \emph{parallel class} is  a maximal set of mutually parallel pairs.
The \emph{multiplicity} of a parallel class (and of the pairs in
the class) is the cardinality of the class.  As important special
cases, a \emph{simple pair} is a parallel class of multiplicity one,
a \emph{double pair} is a parallel class of multiplicity two and
a \emph{triple pair} is a parallel class of multiplicity three.

Each pairing gives rise to a multigraph in $\G(\kvec)$ by
replacing each cell by a vertex, and letting the multiplicity
of the edge $\{v,w\}$ equal the multiplicity of the parallel class
between the corresponding cells.  

Let $\pairings_{\ell,d,t}$ be the set of all pairings in $\pairings(\kvec)$
with exactly $\ell$ simple loops, exactly $d$ double pairs and exactly
$t$ triple pairs, but with no loops of multiplicity greater than one
and no links of multiplicity greater than three.  
If $G\in \G_0 - Y$ can be formed from a pairing $P\in\pairings_{\ell,d,t}$,
then exactly 
\[ 2^{-(\ell+d)}\, 6^{-t}\, \prod_{i=1}^n k_i! \]
pairings in $\pairings(\kvec)$ give rise to $G$.
Now defining
\[ w(\ell,d,t) = 2^{\ell+d}\, 6^t\, \ac{\ell,d,t},\]
we can write
\begin{equation}
\label{counting-pairings}
|\G_0 - Y| = \biggl(\,\prod_{i=1}^n k_i!\biggr)^{\!\!-1}
        \, \sum_{\ell=0}^{N_1}\,\sum_{d=0}^{N_2}\,\sum_{t=0}^{N_3}
                \,w(\ell,d,t).
\end{equation}
Hence it suffices to obtain an asymptotic expression for the above sum.

We will need the following two summation lemmas adapted from~\cite{GMW}:

\begin{lemma}[{\cite[Corollary 4.3]{GMW}}]\label{sumcor}
Let $0\leq A_1\leq A_2$ and $B_1\leq B_2$ be real numbers.
Suppose that there exist integers $N$, $K$ with
$N\geq 2$ and  $0\leq K\leq N$, and a real number
$c> 2e$ such that $Ac<N-K+1$ and $\abs{BN}<1$
for all $A\in[A_1,A_2]$ and $B\in[B_1,B_2]$. 
Further suppose that there are real numbers $\delta_i$, for $1\le i\le N$,
and\/ $\gamma_i\ge 0$,  for $0\le i\le K$, such that\/
$\sum_{j=1}^i \abs{\delta_j}\le \sum_{j=0}^K\gamma_j\ff ij<\tfrac15$
for\/ $1\le i\le N$.\\[1ex]
Given $A(1),\ldots,A(N)\in[A_1,A_2]$ and
$B(1),\ldots,B(N)\in[B_1,B_2]$,
define $n_0,n_1,\ldots,n_N$ by $n_0=1$ and
$$n_i = \frac 1i A(i)\(1 - (i-1)B(i)\)\(1+\delta_i) \,n_{i-1}$$
for\/ $1\le i\le N$. 
Then  $$\varSigma_1 \le \sum_{i=0}^N n_i\le \varSigma_2,$$
where
\begin{align*}
 \varSigma_1 &= \exp\Bigl( A_1 - \tfrac12 A_1^2B_2 
     - 4 \sum_{j=0}^K\gamma_j(3A_1)^j\Bigr) - \tfrac14 (2e/c)^N,\\
 \varSigma_2 &= \exp\Bigl( A_2 - \tfrac12 A_2^2B_1 
 + \tfrac12 A_2^3B_1^2
     + 4 \sum_{j=0}^K\gamma_j(3A_2)^j\Bigr) + \tfrac14 (2e/c)^N.
     \quad\qedsymbol
\end{align*}
\end{lemma}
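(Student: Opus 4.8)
Since the lemma is quoted from \cite[Corollary 4.3]{GMW}, the shortest route is to check that the notation matches --- in particular that the substantive hypotheses here ($c>2e$; the smallness $\sum_{j=1}^i\abs{\delta_j}\le\sum_{j=0}^K\gamma_j\ff ij<\tfrac15$; and $Ac<N-K+1$, $\abs{BN}<1$ on the relevant intervals) are exactly those of that corollary --- and invoke it. If a self-contained argument is wanted, here is how I would proceed. Unroll the recursion into a product,
\[
 n_i=\frac{1}{i!}\Bigl(\prod_{j=1}^i A(j)\Bigr)\Bigl(\prod_{j=1}^i\bigl(1-(j-1)B(j)\bigr)\Bigr)\Bigl(\prod_{j=1}^i(1+\delta_j)\Bigr),
\]
and estimate the three products separately. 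For the upper bound, replace $\prod A(j)$ by $A_2^i$ and each $1-(j-1)B(j)$ by $1-(j-1)B_1$ (the largest choice; since $i\le N$ and $\abs{B_1 N}<1$ every such factor is positive). Bound the $B_1$-product by $\exp\bigl(-B_1\binom{i}{2}-\tfrac1{12}B_1^2(i-1)i(2i-1)\bigr)$ via $\log(1-x)\le-x-\tfrac12x^2$ --- the second-order term is genuinely needed, as the cruder $e^{-B_1\binom{i}{2}}$ loses too much. For the $\delta$-product, $\bigl|\sum_{j\le i}\log(1+\delta_j)\bigr|\le\tfrac54\sum_{j\le i}\abs{\delta_j}\le\tfrac54\sum_j\gamma_j\ff ij<\tfrac14$, so $\prod(1+\delta_j)$ lies between $1\pm2\sum_j\gamma_j\ff ij$.

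Now sum over $i$, first extending the range to $\infty$, using the exponential-series identities $\sum_i\frac{A^i}{i!}\ff ij=A^je^A$; in particular $\sum_i\frac{A^i}{i!}\binom{i}{2}=\tfrac12A^2e^A$ and $\sum_i\frac{A^i}{i!}\binom{i}{2}^{2}=\tfrac14(A^4+4A^3+2A^2)e^A$, with analogous polynomial-in-$A$ values for the mixed terms $\binom{i}{2}\ff ij$, $\ff ij\ff i{j'}$, and so on. The result is $e^{A_2}$ times $1$ plus a polynomial in $A_2$, $B_1$ and the $\gamma_j$ in which every correction carries a spare power of $A_2$; re-exponentiating and bounding the cross terms crudely lets everything sit inside $\exp\bigl(A_2-\tfrac12A_2^2B_1+\tfrac12A_2^3B_1^2+4\sum_j\gamma_j(3A_2)^j\bigr)$ --- the $3$ in $(3A_2)^j$ and the $4$ (rather than $2$) being exactly the slack that absorbs the interactions among the three corrections. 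Finally, for the truncation at $i=N$: since $A_2c<N-K+1$ and $c>2e$ we have $A_2/i<1/c$ for $i>N$, so $\sum_{i>N}\frac{A_2^i}{i!}\le\frac{A_2^{N+1}}{(N+1)!}\cdot\frac{c}{c-1}\le(e/c)^{N+1}\frac{c}{c-1}$, and (the $e^{-B_1\binom{i}{2}}$ and $\delta$ factors being $\le1$ when $B_1\ge0$, and otherwise controlled by $\abs{B_1 N}<1$) the whole tail is $\le\tfrac14(2e/c)^N$, giving $\sum_{i=0}^N n_i\le\varSigma_2$. The lower bound $\sum_{i=0}^N n_i\ge\varSigma_1$ is parallel: use $A_1$, $B_2$, the opposite signs ($\prod(1-(j-1)B_2)\ge1-B_2\binom{i}{2}$, etc.), keep only first-order corrections, and restrict to $i\le N'$ for a suitable $N'<N$ so that the Taylor expansion of the $B$-product stays valid near $i=N'$, absorbing the discarded part of the sum into $-\tfrac14(2e/c)^N$.

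The main obstacle is the bookkeeping in the middle step: multiplying out the three corrections produces a zoo of terms ($\binom{i}{2}\ff ij$, $\binom{i}{2}^{2}$, $\binom{i}{2}^{2}\ff ij$, $\ff ij\ff i{j'}$, \dots), each of which must be re-summed via the identities above and then shown --- with room to spare --- to fit inside the claimed exponential, and pinning down the constants ($3$, $4$, and the coefficient $\tfrac12$ of $A^3B^2$) is exactly the careful accounting done in \cite{GMW}. A secondary nuisance is the sign case analysis on $B_1$ and $B_2$, since the quadratic bound $e^{-x}\le1-x+\tfrac12x^2$ used when re-exponentiating holds only for $x\ge0$; when $B_1<0$ the $+\tfrac12A_2^3B_1^2$ term in $\varSigma_2$ is automatically nonnegative and, together with $\abs{B_1N}<1$ and the truncation, supplies the slack needed for the now-growing $B$-product.
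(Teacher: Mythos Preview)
Your first sentence is exactly right: the paper gives no proof of this lemma at all --- it is stated verbatim from \cite[Corollary~4.3]{GMW} and marked with a terminal \qedsymbol, so the paper's ``proof'' is simply the citation. Everything after your first sentence is a self-contained sketch that goes well beyond what the paper does; it is broadly the right strategy (and indeed tracks the argument in \cite{GMW}), but since the paper itself merely invokes the reference, there is nothing further to compare.
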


\begin{lemma}[{\cite[Corollary 4.5]{GMW}}]\label{sumcor2}
Let $N\geq 2$ be an integer and, for $1\leq i\leq N$, let real
numbers $A(i)$, $C(i)$ be given such that $A(i)\geq 0$ and
$A(i)-(i-1)C(i) \ge 0$.
Define 
$A_1 = \min_{i=1}^N A(i)$, $A_2 = \max_{i=1}^N A(i)$,
$C_1 = \min_{i=1}^N C(i)$ and $C_2=\max_{i=1}^N C(i)$.
Suppose that there exists a real number $\hat{c}$ with 
$0<\hat{c} < \tfrac{1}{3}$ such that 
$\max\{ A/N,\, \abs{C}\} \leq \hat{c}$ 
for all $A\in [A_1,A_2]$,  $C\in [C_1,C_2]$.
Define $n_0,\ldots ,n_N$ by $n_0=1$ and
\[ n_i = \frac{1}{i}\(A(i)-(i-1)C(i)\)\, n_{i-1} \]
for $1\leq i\leq N$.  Then
\[ \varSigma_1 \leq \sum_{i=0}^N n_i\leq \varSigma_2, \]
where
\begin{align*}
 \varSigma_1 &= \exp\( A_1 - \tfrac{1}{2} A_1 C_2 \)
               - (2e\hat{c})^N,\\
 \varSigma_2 &= \exp\( A_2 - \tfrac{1}{2} A_2 C_1 +
              \tfrac12 A_2 C_1^2 \) + (2e\hat{c})^N.
     \quad\qedsymbol
\end{align*}
\end{lemma}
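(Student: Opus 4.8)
The plan is to prove the two-sided bound by sandwiching the partial sum $\sum_{i=0}^{N} n_i$ between the values of two constant-coefficient recursions, as in \cite[Corollary 4.5]{GMW} and in the spirit of Lemma~\ref{sumcor}, but more directly since there is no factor $(1+\delta_i)$ here. First I would record that every $n_i$ is nonnegative (because $A(i)-(i-1)C(i)\ge0$) and that the recursion is monotone in its coefficients, so $n_i\le m_i^{+}$ for $0\le i\le N$, where $m_0^{+}=1$ and $m_i^{+}=\tfrac1i\big(A_2-(i-1)C_1\big)m_{i-1}^{+}$; this is legitimate because $A_2-(i-1)C_1\ge A(i)-(i-1)C(i)\ge0$ keeps $m_i^{+}$ nonnegative throughout $i\le N$. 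For a lower bound I would build $m_i^{-}$ from $(A_1,C_2)$ in the same way, except that if $C_2>0$ the factor $A_1-(i-1)C_2$ can turn negative at some index $i^{*}+1$; there I would instead compare against the sequence set to zero for $i>i^{*}$, which still lies below $n_i$. In each case the comparison sequence is explicit, $m_i^{+}=\tfrac1{i!}\prod_{j=0}^{i-1}(A_2-jC_1)=\binom{A_2/C_1}{i}C_1^{\,i}$, so its full series is the (convergent, since $|C_1|<1$) binomial series $(1+C_1)^{A_2/C_1}$, and likewise $\sum_{i\ge0}m_i^{-}=(1+C_2)^{A_1/C_2}$.

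Next I would pass from these closed forms to the exponentials in $\varSigma_1$ and $\varSigma_2$ using $\tfrac1C\log(1+C)=1-\tfrac C2+\tfrac{C^2}{3}-\cdots$: for $|C|<\tfrac13$ the residual part of the exponent has a fixed sign and is bounded by $\tfrac12 C^2$, which gives $(1+C_1)^{A_2/C_1}\le\exp\big(A_2-\tfrac12 A_2C_1+\tfrac12 A_2C_1^2\big)$ and $(1+C_2)^{A_1/C_2}\ge\exp\big(A_1-\tfrac12 A_1C_2\big)$. It then remains to estimate the tail discarded when the infinite series is replaced by the partial sum. The key point is that the terms $|m_k^{\pm}|$ increase and then decrease, with the peak at $k\approx A_2\le N\hat c<N/3$, so for $k\ge N$ the ratio of consecutive terms is at most $2\hat c<\tfrac23$; combined with the single-term estimate $|m_N^{\pm}|\le (2N\hat c)^{N}/N!\le(2e\hat c)^{N}$ (using $N!\ge(N/e)^{N}$), this bounds the magnitude of the tail by $(2e\hat c)^{N}$. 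Moreover, past its sign change the tail of $m_k^{\pm}$ is an alternating series with decreasing terms, hence nonpositive, which tightens the upper bound and, in the truncated lower-bound case, already gives $\sum_{i=0}^{i^{*}}m_i^{-}\ge(1+C_2)^{A_1/C_2}$ with no error term. Assembling these estimates yields $\varSigma_1\le\sum_{i=0}^{N}n_i\le\varSigma_2$.

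I expect the lower bound to be the main obstacle. Unlike the upper comparison, the constant-coefficient sequence $m_i^{-}$ need not be nonnegative, so one cannot compare term by term all the way to $N$; the remedy — truncating at the first negative factor and then using that the discarded binomial-series tail is alternating with decreasing terms, hence nonpositive — is the one genuinely non-routine step. A secondary nuisance is making the tail estimates uniform regardless of whether the sign change of $m_k^{+}$ occurs before or after the index $N$, which is exactly where the hypothesis $A_2\le N\hat c$ enters: it pins the peak of the terms strictly below $N$, so they are already decreasing by the time the truncation point is reached.
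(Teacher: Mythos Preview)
The paper does not supply a proof of this lemma: it is quoted from \cite[Corollary~4.5]{GMW}, and the \qedsymbol\ at the end of the statement signals that no argument is reproduced here. So there is no in-paper proof to compare against, only the citation.

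Your reconstruction is correct and is essentially the argument one would expect the cited source to give: sandwich $n_i$ between the constant-coefficient sequences $m_i^\pm$, recognise $m_i^\pm$ as generalised binomial coefficients with closed sum $(1+C)^{A/C}$, convert those to the stated exponentials via the expansion $C^{-1}\log(1+C)=1-\tfrac{C}{2}+\tfrac{C^2}{3}-\cdots$ (the remainder after the linear term is nonnegative, and after the quadratic term is at most $\tfrac12 C^2$ for $|C|<\tfrac13$), and control the tail beyond index~$N$ using the geometric decay forced by $\max\{A/N,|C|\}\le\hat c$. Two small points are worth tightening if you write this out in full. First, your lower-bound discussion should split into two cases: when $C_2>0$ and the sign change $i^*$ occurs at or before~$N$, your alternating-tail observation indeed gives $\sum_{i\le i^*}m_i^-\ge(1+C_2)^{A_1/C_2}$ with no correction; but when $C_2\le 0$, or when $C_2>0$ with $i^*>N$, all $m_i^-$ for $i\le N$ are nonnegative and the $(2e\hat c)^N$ term is genuinely needed to absorb the positive tail. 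Second, the crude estimate $N!\ge(N/e)^N$ yields a tail bound of $2(2e\hat c)^N$ rather than $(2e\hat c)^N$; the missing factor is recovered by using $N!\ge\sqrt{2\pi N}\,(N/e)^N$ together with the hypothesis $N\ge 2$.
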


In the proofs of Lemmas 4.3--4.5, we will use several results
which were proved in~\cite{MW91} (specifically, Lemmas
4.1--4.4 of that paper and some details of their proofs).
That paper uses values of $N_1,N_2,N_3$ that differ
from ours, but only by bounded factors, and examination
of the proofs in~\cite{MW91} shows that the
results we wish to apply remain valid when our values of
$N_1,N_2,N_3$ are used.

\medskip

First we perform a summation over the number of edges of multiplicity
three.

\begin{lemma}\label{tripcount}
Uniformly for $0\le d\le N_2$ and $0\le \ell\le N_1$, we have
\[  \sum_{t=0}^{N_3} w(\ell,d,t) = w(\ell,d,0)\,
    \exp\biggl( \frac{M_3^2}{2M^3}
      + O\(\kmax^3/M\)\biggr). \]
\end{lemma}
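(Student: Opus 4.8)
The plan is to fix $\ell$ and $d$, put $n_t=w(\ell,d,t)$ for $0\le t\le N_3$, obtain a recurrence for $n_t$ in terms of $n_{t-1}$ by a switching in the pairing model, and then sum using one of the summation lemmas of~\cite{GMW}. Since $w(\ell,d,t)=2^{\ell+d}\,6^t\,|\pairings_{\ell,d,t}|$ and the factor $2^{\ell+d}$ does not depend on $t$, it cancels, so it suffices to show that $\sum_{t=0}^{N_3}6^t|\pairings_{\ell,d,t}|=|\pairings_{\ell,d,0}|\exp\bigl(M_3^2/(2M^3)+O(\kmax^3/M)\bigr)$ uniformly for $\ell\le N_1$ and $d\le N_2$. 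It is convenient to dispose first of the range $M_3\le\kmax^{3/2}M$, where $M_3^2/M^3=O(\kmax^3/M)$: there it suffices to show that $n_t/n_0$ is small, say $\le 2^{-t}$ for $t\ge 1$, which follows from a crude form of the switching estimate below.

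For the recurrence in the main range I would use the pairing-model analogue of a switching of colour~15: given a pairing in $\pairings_{\ell,d,t}$, pick one of its $t$ triple pairs, say between cells $c_i$ and $c_j$, pick three pairwise-distinct simple links lying generically away from $c_i,c_j$ and from one another, and perform the three disjoint $2$-switches that delete the triple pair and the three chosen links and create six new simple links (three of them meeting $c_i$ or $c_j$). Counting the ways to perform this switching, and the ways in which it can produce a given pairing, is the same computation as the analysis of triple edges in~\cite{MW91}: the three points used in each of $c_i$ and $c_j$ contribute the falling factorials $[k_i]_3$ and $[k_j]_3$; the three simple links contribute, to leading order, a factor of order $\bigl((1-o(1))M\bigr)^3$, since the points lying on loops, double pairs or triple pairs number at most $2\ell+4d+6t=o(M)$ when $\ell\le N_1$, $d\le N_2$, $t\le N_3$ and $\kmax^3=o(M)$ (the pairing-model analogue of Lemma~\ref{mostlyones}); and the ``bad'' choices --- those that would create a loop or a multiple pair, or would change $\ell$ or $d$ --- contribute only a relative error $O(\kmax/M)$. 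The one difference from~\cite{MW91} is the extra factor $6$ per step coming from the weight $6^t$, which turns the Poisson mean $M_3^2/(12M^3)$ into $M_3^2/(2M^3)$. Writing the outcome as $n_t=\tfrac1t\bigl(A(t)-(t-1)C(t)\bigr)n_{t-1}$, one obtains $A(t)=M_3^2/(2M^3)+O(\kmax^3/M)$, with a depletion rate of $O(\kmax^2/M_3)$ per triple, so that $A(t)C(t)=O(\kmax^3/M)$; the dependence on $\ell$ and $d$ enters only through lower-order terms, which gives the uniformity.

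Finally I would apply Lemma~\ref{sumcor2} to $n_0,\ldots,n_{N_3}$ with $N=N_3$ and $\hat c$ a small constant. The role of the definition $N_3=\max\{\lceil\log M\rceil,\lceil 240\,M_3^2/M^3\rceil\}$ is exactly to force both $A(t)/N_3\le\hat c<\tfrac13$ and $(2e\hat c)^{N_3}\le(2e\hat c)^{\log M}=O(M^{-2})=O(\kmax^3/M)$, so that the hypotheses hold and the tail error is within budget; in the main range $M_3>\kmax^{3/2}M$ the depletion rate $O(\kmax^2/M_3)=O(\kmax^{1/2}/M)$ is small enough that $(t-1)C(t)\le A(t)$, as Lemma~\ref{sumcor2} requires. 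The conclusion of the lemma then reads $\sum_{t=0}^{N_3}n_t/n_0=\exp\bigl(M_3^2/(2M^3)+O(\kmax^3/M)\bigr)$, once one checks, using $M_3\le\kmax^2M$, $M_4\le\kmax^3M$ and $\kmax^2/M\to0$, that every secondary term in the lemma's bounds (the $A_iC_j$-type products and the spread of $A(t)$ over $t\le N_3$) is $O(\kmax^3/M)$. This gives the stated formula; Lemma~\ref{sumcor} could be used instead if one prefers to carry the depletion as a multiplicative factor $1-(t-1)B(t)$.

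I expect the main obstacle to be the switching ratio $A(t)$ itself: because $M_3^2/(2M^3)$ can be as large as a small positive power of $M$, the leading-order count of switchings must be pinned down with relative error $o(1/\kmax)$, so the enumeration of ``bad'' choices (cell coincidences among the six new points, and conflicts with the pre-existing loops, double pairs and triple pairs) has to be done carefully. This is essentially a re-run of the triple-edge computation in~\cite{MW91}, now carrying the weight $6^t$ and with $\ell$ and $d$ held fixed; once it is in place, the summation via Lemma~\ref{sumcor2} is routine.
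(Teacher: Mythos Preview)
Your proposal is correct and follows essentially the same route as the paper: the same triple-pair switching borrowed from~\cite[Lemma~4.1]{MW91}, the same recurrence shape $n_t=\tfrac1t\bigl(A(t)-(t-1)C(t)\bigr)n_{t-1}$ with $A(t)=M_3^2/(2M^3)+O(\kmax^3/M)$ after absorbing the weight~$6^t$, and the same summation via Lemma~\ref{sumcor2} with a small constant~$\hat c$. The only organisational difference is that where you split into the ranges $M_3\le\kmax^{3/2}M$ and $M_3>\kmax^{3/2}M$ to handle the possibility that the leading term is already within the error, the paper instead introduces $t'$, the first index at which $\pairings_{\ell,d,t}$ is empty, sets $A(t)=C(t)=0$ for $t\ge t'$, and observes (from the reverse switching count in~\cite{MW91}) that $t'\le N_3$ forces $M_3=O(\kmax^2(\kmax^2+\log M))$, whence $M_3^2/M^3=O(\kmax^3/M)$ and the trivial lower bound~$1$ suffices; the two devices cover the same degenerate situation and your threshold $\kmax^{3/2}M$ is chosen so that non-emptiness is automatic in your main range.
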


\begin{proof}
We will apply the proof of~\cite[Lemma~4.1]{MW91}.

Let $t'$ be the first value of $t\leq N_3$ for which $\pairings_{\ell,d,t}=\emptyset$,
or $t'=N_3+1$ if there is no such value.
In~\cite[Lemma~4.1]{MW91}, a switching is described that converts any pairing
in $\pairings_{\ell,d,t}$ to at least one in $\pairings_{\ell,d,t-1}$, for $1\le t\le N_3$,
so we know that $w(\ell,d,t)=0$ for $t'\le t\le N_3$.
In particular, the present lemma is true when $w(\ell,d,0)=0$,
so we assume that $t'\ge 1$.

Noting that 
\[ \frac{w(\ell,d,t)}{w(\ell,d,t-1)}
  = \frac{6\,\ac{\ell,d,t}}{\ac{\ell,d,t-1}} \]
when the denominators are nonzero, the calculation in~\cite[Lemma~4.1]{MW91}
shows that there is some uniformly bounded function $\alpha_t=\alpha_t(\ell,d)$ 
such that
\begin{equation}\label{trat}
   \frac{w(\ell,d,t)}{w(\ell,d,0)} 
  = \frac 1t \, \frac{w(\ell,d,t-1)}{w(\ell,d,0)}\, \(A(t) - (t-1)C(t)\)
\end{equation}
for $1\le t\le N_3$, where
\[ A(t)= \frac{M_3^2 - \alpha_t \kmax^2(\kmax^2+\ell+d)M_3}{2M^3},\quad
   C(t) = \frac{\alpha_t \kmax^2 M_3}{2M^3} \]
for $1\le t<t'$ and $A(t)=C(t)=0$ for $t\ge t'$.

Now we can apply Lemma~\ref{sumcor2}.
It is clear that $A(t)-(t-1)C(t)\ge 0$ by~\eqref{trat}.  If $\alpha_t\ge 0$
then $A(t)\ge A(t)-(t-1)C(t)\ge 0$, while if $\alpha_t< 0$ then the 
definition of $A(t)$ makes it evidently nonnegative.  Now define
$A_1,A_2, C_1,C_2$ by taking the minimum and maximum of
$A(t)$ and $C(t)$ over $1\leq t\leq N_3$. 
Let $A\in [A_1,A_2]$ and $C\in [C_1,C_2]$, and set
$\hat{c}=\tfrac1{80}$.
Since $A=M_3^2/2M^3+o(1)$ and $C=o(1)$, we have that
$\max\{A/N_3,\abs{C}\}<\hat{c}$ 
for $M$ sufficiently large, by the definition of $N_3$.

Therefore Lemma~\ref{sumcor2} applies, and gives an 
upper bound
\[ \sum_{t=0}^{N_3} \frac{w(\ell,d,t)}{w(\ell,d,0)}
   \le \exp\biggl( \frac{M_3^2}{2M^3}
            + O\(\kmax^4(\kmax^2+\ell+d)/M^2\)\biggr) +
                             O\((e/40)^{N_3}\). \]
Since $\ell + d \leq N_1 + N_2 = O(\kmax^2 + \log M)$
and $(e/40)^{N_3} \leq (e/40)^{\log M} \leq M^{-2}$,
\[  \sum_{t=0}^{N_3} \frac{w(\ell,d,t)}{w(\ell,d,0)}
  \le \exp\biggl( \frac{M_3^2}{2M^3}
      + O\(\kmax^3/M\)\biggr). \]
In the case that $t'=N_3+1$, the lower bound given by 
Lemma~\ref{sumcor2} is the same within the stated
error term, so we are done.

This leaves the case $1\le t'\le N_3$.  
By the counts of the second switching in the proof of
\cite[Lemma~4.1]{MW91}, $\ac{\ell,d,t}=0$ is only possible
if $M_3=O(\kmax^2(\kmax^2+\ell+d+t))$.  If this happens
for $t\le N_3$ we have 
\[ M_3=O(\kmax^2(\kmax^2+N_1+N_2+N_3)) 
= O(\kmax^2(\kmax^2 + \log M)).\]
However, this implies that $M_3^2/M^3=O(\kmax^3/M)$ so
the upper bound matches the trivial lower bound 1 within the
error term.
This completes the proof.
\end{proof}

Next we perform a summation over the number of simple loops.

\begin{lemma}\label{loopcount}
Uniformly for $0\le d\le N_2$, we have
\[  \sum_{\ell=0}^{N_1}\, w(\ell,d,0) =
    w(0,d,0) \,\exp\biggl( \frac{M_2}{M}
      + O\biggl(\frac{\kmax d}{M}+\frac{\kmax^3}{M}\biggr)\biggr). \]
\end{lemma}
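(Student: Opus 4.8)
The plan is to mimic the structure of the proof of Lemma~\ref{tripcount}, but now summing over the number $\ell$ of simple loops rather than the number $t$ of triple pairs. As in the previous lemma, I would first invoke a switching from \cite[Lemma~4.2]{MW91} (the companion lemma that handles simple loops) which converts any pairing in $\pairings_{\ell,d,0}$ into at least one pairing in $\pairings_{\ell-1,d,0}$ for $1\le \ell\le N_1$. This shows that once $w(\ell,d,0)=0$ for some $\ell$, it stays zero for all larger $\ell$, so if $w(0,d,0)=0$ the statement holds trivially; hence I may assume $w(0,d,0)\ne 0$ and let $\ell'$ be the first index with $\pairings_{\ell',d,0}=\emptyset$ (or $\ell'=N_1+1$).

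Next I would extract, from the switching counts in \cite[Lemma~4.2]{MW91}, an expression for the ratio
\[
  \frac{w(\ell,d,0)}{w(\ell-1,d,0)} = \frac{2\,\ac{\ell,d,0}}{\ac{\ell-1,d,0}}
\]
of the form $\tfrac1\ell\bigl(A(\ell)-(\ell-1)C(\ell)\bigr)$, where $A(\ell)$ is essentially $M_2/M$ up to a relative error of size $O(\kmax^2(\kmax^2+d)/M)$ coming from the ``bad'' choices in the switching (vertices forced to coincide, pre-existing edges, interaction with the $d$ double pairs), and $C(\ell)=O(\kmax^2/M^2)$ or so, with $A(\ell)=C(\ell)=0$ for $\ell\ge\ell'$. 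I then define $A_1,A_2,C_1,C_2$ as the min/max of $A(\ell),C(\ell)$ over $1\le\ell\le N_1$, check that $A(\ell)-(\ell-1)C(\ell)\ge0$ (immediate from the ratio being nonnegative) and that $A(\ell)\ge0$ in both sign cases of the error term, and verify that $\max\{A/N_1,|C|\}$ is below some fixed $\hat c<\tfrac13$ for large $M$, using that $A=M_2/M+o(1)$ and that $N_1\ge\lceil 480 M_2/M\rceil$. This puts me in a position to apply Lemma~\ref{sumcor2} with $n_\ell = w(\ell,d,0)/w(0,d,0)$.

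Applying Lemma~\ref{sumcor2} gives upper and lower bounds of the form $\exp\bigl(M_2/M + O(\kmax^2(\kmax^2+d)/M)\bigr) \pm (2e\hat c)^{N_1}$. The error term $O(\kmax^2(\kmax^2+d)/M)$ must be simplified: the $\kmax^4/M$ part is $O(\kmax^3/M)$ since $\kmax=o(M^{1/3})$ forces $\kmax^4/M = \kmax\cdot\kmax^3/M = o(\kmax^3/M)$; and the $\kmax^2 d/M$ part is exactly the $O(\kmax d/M)$ term in the statement up to bounded factors — wait, I should be careful: the claimed error is $O(\kmax d/M + \kmax^3/M)$, so I need the switching error to actually be $O(\kmax d/M)$ rather than $O(\kmax^2 d/M)$; I would check the \cite{MW91} counts to confirm the $d$-dependence enters only linearly in $\kmax$ (the double pairs obstruct a bounded number of vertex choices, each costing a factor $O(\kmax/M)$ relative to the main term $M_2/M$, hence $O(\kmax d/M)$ absolute). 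The tail term $(2e\hat c)^{N_1}$ is absorbed using $N_1\ge\lceil\log M\rceil$, giving $(2e\hat c)^{N_1}\le M^{-c'}$ for a suitable constant, which is $O(\kmax^3/M\cdot\text{(main term)})$ or smaller. Finally, as in Lemma~\ref{tripcount}, the case $1\le\ell'\le N_1$ (where some $\pairings_{\ell,d,0}$ is empty) is handled by the same observation: emptiness forces $M_2 = O(\kmax^2(\kmax^2+N_1+N_2))= O(\kmax^2(\kmax^2+\log M))$, whence $M_2/M = O(\kmax^3/M)$ and the exponential factor is $1$ within the error term, matching the trivial lower bound.

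The main obstacle I anticipate is bookkeeping the precise form of the error term coming from \cite[Lemma~4.2]{MW91}: I need to verify that the obstructions involving the $d$ double pairs contribute only $O(\kmax d/M)$ and not $O(\kmax^2 d/M)$, and that the obstructions not involving $d$ contribute $O(\kmax^3/M)$ after using $\kmax^3=o(M)$, so that everything collapses to the stated $O(\kmax d/M+\kmax^3/M)$. The application of Lemma~\ref{sumcor2} itself is then routine once $A(\ell)$ and $C(\ell)$ are pinned down, and the empty-class and tail-term technicalities are dispatched exactly as in the proof of Lemma~\ref{tripcount}.
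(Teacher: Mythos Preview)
Your proposal follows the same route as the paper: reduce to Lemma~\ref{sumcor2} via the switching counts of \cite[Lemma~4.2]{MW91}, writing the ratio as $\tfrac1\ell(A(\ell)-(\ell-1)C(\ell))$ with $A(\ell)=(M_2-\beta_\ell(\kmax^3+\kmax d))/M$ and $C(\ell)=\beta_\ell\kmax^2/M$ for some bounded~$\beta_\ell$, and then apply the summation lemma with $\hat c=\tfrac1{80}$. Your worry about the $d$-dependence is resolved exactly as you hope: the obstruction from the $d$ double pairs contributes $O(\kmax d/M)$, not $O(\kmax^2 d/M)$.

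There is, however, a genuine gap in your treatment of the empty-class case $1\le\ell'\le N_1$. You claim that emptiness forces $M_2/M=O(\kmax^3/M)$, so that the trivial lower bound~$1$ matches the upper bound. But the emptiness condition from \cite[Lemma~4.2]{MW91} is $M_2=O(\kmax d+\kmax^2\ell')$, and with $\ell'\le N_1$ this only yields $M_2/M=O(\kmax d/M+\kmax^2\log M/M)$; when $\kmax$ is bounded the term $\kmax^2\log M/M$ is \emph{not} $O(\kmax^3/M)$, so the trivial lower bound fails to match. The paper repairs this by splitting into two subcases. For $\ell'=1$ the emptiness condition gives $M_2=O(\kmax d+\kmax^2)$, and then the trivial bound~$1$ does suffice. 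For $2\le\ell'\le N_1$ one instead uses the first \emph{two} terms of the sum, obtaining the lower bound
\[
   w(0,d,0)\bigl(1+A(1)\bigr)
   = w(0,d,0)\exp\biggl(\frac{M_2}{M}+O\biggl(\frac{\kmax d}{M}+\frac{\kmax^3}{M}\biggr)\biggr),
\]
which is valid because the emptiness bound makes $A(1)=o(1)$ and $A(1)^2=O(\kmax d/M+\kmax^3/M)$, so $\log(1+A(1))=A(1)+O(A(1)^2)$ lands inside the stated error. This two-term lower bound then matches the upper bound from Lemma~\ref{sumcor2}.
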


\begin{proof}
Let $\ell'$ be the first value of $\ell\le N_1$ for which
$\pairings_{\ell,d,0}=\emptyset$,
or $\ell'=N_1+1$ if there is no such value.
In the proof of~\cite[Lemma~4.2]{MW91}, a switching is described
that converts any pairing
in $\pairings_{\ell,d,0}$ to at least one in $\pairings_{\ell-1,d,0}$,
for $1\le \ell\le N_1$,
so we know that $w(\ell,d,0)=0$ for $\ell'\le \ell\le N_1$.
In particular, the present lemma is true when $w(0,d,0)=0$,
so we assume that $\ell'\ge 1$.

Noting that 
\[ \frac{w(\ell,d,0)}{w(\ell-1,d,0)}
  = \frac{2\,\ac{\ell,d,0}}{\ac{\ell-1,d,0}} \]
when the denominators are nonzero, the calculation in~\cite[Lemma~4.2]{MW91}
shows that there is some uniformly bounded function $\beta_\ell=\beta_\ell(d)$ 
such that
\[
   \frac{w(\ell,d,0)}{w(0,d,0)} 
  = \frac 1\ell \, \frac{w(\ell-1,d,0)}{w(0,d,0)}
     \,\(A(\ell) - (\ell-1)C(\ell)\)
\]
for $1\le t\le N_1$, where
\[ A(\ell)= \frac{M_2 - \beta_\ell(\kmax^3+\kmax d)}{M},\quad
   C(\ell) = \frac{\beta_\ell \kmax^2}{M}, \]
for $1\le \ell<\ell'$ and $A(\ell)=C(\ell)=0$ for $\ell\ge \ell'$.

We can now complete the proof using $\hat c=\frac{1}{80}$ and
following the argument used in the previous lemma.
The treatment of the lower bound when $\ell'\le N_1$ needs
some additional care.  From the analysis of the second switching
used in~\cite[Lemma~4.2]{MW91}, $\ell'\le N_1$ can only
happen if $M_2=O(\kmax d + \kmax^2\ell')$.
For $\ell'=1$, the trivial lower bound of 1 matches
the upper bound within the stated error terms.
If $2\le\ell'\le N_1$ then the first two terms of the summation
give the lower bound
\[
w(0,d,0)(1 + A(1)) 
    = w(0,d,0)\exp\left(\frac{M_2}{M} + O\left(\frac{\kmax d}{M} + \frac{\kmax^3}{M}
                 \right)\right)
\]
which matches the upper bound within the stated error terms.
In either case, the proof is complete.
\end{proof}

Next we perform a summation over the number of edges of multiplicity
two.  The exponential factor in the summand corresponds to the
error term from Lemma~\ref{loopcount} which depends on $d$.

\begin{lemma}\label{doublecount}
For any constant $\rho$ have
\[  \sum_{d=0}^{N_2} w(0,d,0)
   \exp\biggl(\frac{\rho \kmax d}{M}\biggr) = w(0,0,0) \,
    \exp\biggl( \frac{M_2^2}{2M^2} - \frac{M_3^2}{2M^3}
      + O\(\kmax^3/M\)\biggr). \]
\end{lemma}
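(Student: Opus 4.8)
The plan is to follow the same template as the previous three lemmas: set up a recurrence for the ratio $w(0,d,0)/w(0,0,0)$ using a switching on double pairs, absorb the extra exponential factor $\exp(\rho\kmax d/M)$ into that recurrence, and then apply one of the summation lemmas from~\cite{GMW}. The relevant switching — reducing the number of double pairs by one — is exactly the one analysed in~\cite[Lemma~4.3]{MW91} (or its analogue), and that calculation should give, when no pairing in $\pairings_{0,d,0}$ is empty, a relation of the form
\[
  \frac{w(0,d,0)}{w(0,d-1,0)} = \frac{1}{d}\,\bigl(\tilde A(d) - (d-1)\tilde C(d)\bigr)
\]
where $\tilde A(d) = M_2^2/M^2 + O(\kmax^3 M_3/M^3 + \text{lower order})$ and $\tilde C(d) = O(\kmax^2 M_2/M^3)$ or similar, with some uniformly bounded coefficient analogous to the $\alpha_t$, $\beta_\ell$ of the earlier lemmas. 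Multiplying the summand by $\exp(\rho\kmax d/M)$ multiplies the ratio $n_d/n_{d-1}$ by $\exp(\rho\kmax/M) = 1 + \rho\kmax/M + O(\kmax^2/M^2)$, which is a factor of the form $1 + O(\kmax/M)$; since the leading term of $\tilde A(d)/d$ is of order $M_2^2/(dM^2)$ and summing the correction over $d$ contributes $O(N_2\cdot\kmax/M\cdot(\text{ratio bound}))$, this fits neatly as a multiplicative $1 + O(\kmax^2/M)$ — but we must be slightly careful since $N_2$ can be as large as $\Theta(\kmax^2)$. I would therefore fold the $\rho\kmax/M$ into $\tilde A(d)$ directly, writing $A(d) = \tilde A(d) + \rho\kmax/M + o(\kmax^3/M)$, so that the structural hypotheses of the summation lemma are checked once on the combined quantity.

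Next I would verify the hypotheses of Lemma~\ref{sumcor2} (or Lemma~\ref{sumcor}, if cross-terms with a $B$-type factor are needed — I expect Lemma~\ref{sumcor2} suffices here as in Lemmas~\ref{tripcount} and~\ref{loopcount}): non-negativity of $A(d) - (d-1)C(d)$ follows from the recurrence itself, non-negativity of $A(d)$ follows either from the sign of the bounded coefficient or trivially from its definition, and the bound $\max\{A/N_2,\,|C|\} \le \hat c$ for some $\hat c < \tfrac13$ holds for $M$ large by the definition of $N_2$ (taking e.g. $\hat c = \tfrac1{80}$), using $A = M_2^2/M^2 + O(\kmax\cdot\text{stuff})$ and $N_2 \ge \lceil 240 M_2^2/M^2\rceil$. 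The summation lemma then yields
\[
  \sum_{d=0}^{N_2}\frac{w(0,d,0)}{w(0,0,0)}\exp\!\biggl(\frac{\rho\kmax d}{M}\biggr)
    = \exp\!\biggl(\frac{M_2^2}{2M^2} - \frac{M_3^2}{2M^3} + O(\kmax^3/M)\biggr)
        + O\bigl((2e\hat c)^{N_2}\bigr),
\]
where the $-M_3^2/(2M^3)$ comes from the $O(\kmax^3 M_3/M^3)$-type correction in $\tilde A(d)$ being more precisely $-M_3^2/M^3$ plus genuinely $O(\kmax^3/M)$ error — this is the one place where I need to extract an exact coefficient rather than just an order of magnitude from the \cite{MW91} calculation. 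The tail term is absorbed since $(2e\hat c)^{N_2} \le (2e\hat c)^{\log M} = M^{-c'}$ for a constant $c' > 1$, and the $O(\kmax^3/M)$ in the exponent dominates it.

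The remaining case, as in the earlier lemmas, is when $\pairings_{0,d,0} = \emptyset$ for some $d \le N_2$ (i.e. $d' \le N_2$ in the notation analogous to $t'$, $\ell'$). The second switching in the proof of the relevant lemma of~\cite{MW91} shows this can only occur if $M_2^2 = O(\kmax^2(\kmax^2 + \text{(stuff)} + d'))$, forcing $M_2^2/M^2 = O(\kmax^3/M)$; then the claimed expression collapses to $1$ within the error term, matching the trivial lower bound (for $d'=1$) or the first two summands $w(0,0,0)(1 + A(1))$ (for $2\le d'\le N_2$), exactly parallel to the endgame of Lemma~\ref{loopcount}. The main obstacle is the second paragraph: pinning down that the subleading $M_3^2$-contribution to the double-pair recurrence has coefficient exactly $-1/M^3$ (and that everything else really is $O(\kmax^3/M)$ after summing), which requires reading off specific constants from the switching counts in~\cite[Lemma~4.3]{MW91} rather than just their orders; the bookkeeping with the extra $\exp(\rho\kmax d/M)$ factor and the possibly-large $N_2$ is a minor but real nuisance that the "fold it into $A(d)$" device handles cleanly.
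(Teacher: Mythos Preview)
Your overall template is right, but there is a genuine structural gap: the paper's proof splits into two cases, $M_2\le M$ and $M_2>M$, and the second case cannot be handled the way you suggest. When $M_2\le M$, your plan works essentially verbatim: \cite[Lemma~4.3]{MW91} gives
\[
   \frac{m_d}{m_{d-1}} = \frac{M_2^2}{2dM^2}
       + O\biggl(\frac{\kmax(\kmax^2+d)M_2}{dM^2}\biggr),
\]
Lemma~\ref{sumcor2} applies with $\hat c=\tfrac1{80}$, and the term $M_3^2/(2M^3)$ is absorbed into $O(\kmax^3/M)$ because $M_3\le\kmax M_2\le\kmax M$. So in this case there is nothing ``more precise'' to extract: your guess that the $-M_3^2/(2M^3)$ comes from sharpening the error in \cite[Lemma~4.3]{MW91} is wrong --- that error term has no $M_3$ in it, and in this regime the $M_3^2$ contribution is genuinely negligible.

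When $M_2>M$, however, the crude ratio above is inadequate: the uncertainty in $A(d)$ is $O(\kmax^3 M_2/M^2)$, which can be as large as $\kmax^4/M$ and so swamps the claimed error. Here the paper invokes the refined \cite[Lemma~4.4]{MW91}, which gives the recurrence in the form
\[
   m_d = m_{d-1}\,\frac{A(d)}{d}\bigl(1-(d-1)B(d)\bigr)(1+\delta_d),
\]
with an explicit $A(d)=\dfrac{M_2^2}{2M^2}+\dfrac{2M_2^2M_3}{M^4}-\dfrac{M_3^2}{2M^3}-\dfrac{M_2^4}{M^5}+O(\kmax^3/M)$, a nonzero $B=-8/M+16M_3/M_2^2$, and $\delta_d=O((d-1)/M_2)$. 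This is the shape required by Lemma~\ref{sumcor}, not Lemma~\ref{sumcor2}; the $-\tfrac12A^2B$ term produced by Lemma~\ref{sumcor} exactly cancels the $2M_2^2M_3/M^4$ and $-M_2^4/M^5$ pieces of $A$, leaving $M_2^2/(2M^2)-M_3^2/(2M^3)+O(\kmax^3/M)$. The $\delta_d$ are handled via the $\gamma_j$ mechanism of Lemma~\ref{sumcor} with $K=2$ and $\gamma_2=O(1/M_2)$. Your parenthetical ``or Lemma~\ref{sumcor}, if cross-terms with a $B$-type factor are needed'' is exactly what happens, and it is not optional. Finally, your fold-in of $\exp(\rho\kmax/M)$ is morally fine but stated incorrectly: the factor multiplies the ratio, so the correction to $A(d)$ is $A(d)\cdot O(\kmax/M)=O(\kmax M_2^2/M^3)=O(\kmax^3/M)$, not an additive $\rho\kmax/M$; this is harmless but worth getting right.
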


\begin{proof}
Let $d'$ be the first value of $d\le N_2$ for which $\pairings_{0,d,0}=\emptyset$,
or $d'=N_2+1$ if no such value of $d$ exists.
As in the previous two lemmas, the switchings described in~\cite{MW91}
(see also~\cite[Lemma 4]{MW90b})
show that $w(0,d,0)=0$ for $d'\le d\le N_2$.
They also show that $d'\le N_2$ is only possible if
$M_2=O(\kmax^3+\kmax d)$.
In particular, the lemma is true if $w(0,0,0)=0$, so we assume
that $d'\ge 1$.

We divide the proof into two cases, following the division used
in~\cite[Lemmas 4.3--4.4]{MW91}.
For $0\le d\le N_2$, define
\[
  m_d = w(0,d,0) \, \exp\biggl(\frac{\rho \kmax d}{M}\,\biggr).
\]

First suppose that $M_2\le M$. From~\cite[Lemma~4.3]{MW91} we
know that for $1\le d<d'$,
\[
    \frac{m_d}{m_{d-1}} =
   \frac{2\, \exp(\rho\kmax/M)\, \ac{0,d,0}}{\ac{0,d-1,0}} =
      \frac{M_2^2}{2dM^2} + O\biggl( \frac{\kmax(\kmax^2+d)M_2}{dM^2}\biggr).
\]
The current lemma follows from this, arguing as in the proofs of the previous
two lemmas.  (Note that the term $M_3^2/2M^3$ in the answer is absorbed into the error term, by the assumption that $M_2\leq M$.)
In the case that $d'\le N_2$, the condition $M_2=O(\kmax^3+\kmax d)$
implies that 1 is a sufficient lower bound.

\medskip

Now suppose that $M_2>M$.
In this case $M_2=O(\kmax^3+\kmax d)$ is not possible
for $d\le N_2$, so $d'=N_2+1$ and the series contains only positive terms.
By~\cite[Lemma~4.4]{MW91} we have
for $1\le d\le N_2$,
\[
     m_d = m_{d-1} \,
        \frac{A(d)}{d}\(1-(d-1)B(d)\)\,(1+\delta_d), 
\]
where
\begin{align*}
  A(d) &= \frac{M_2^2}{2M^2} + \frac{2M_2^2M_3}{M^4}
             - \frac{M_3^2}{2M^3} - \frac{M_2^4}{M^5} + O(\kmax^3/M), \\[0.4ex]
  B(d) &= B = -\frac{8}{M} + \frac{16M_3}{M_2^2}, \\[0.4ex]
  \delta_d &= O\((d-1)/M_2\) \,\,\,
     \text{ uniformly for $1\leq d\leq N_2$}.
\end{align*}
(Note that $A(d)\exp(\rho\kmax/M)$ equals $A(d)$ within the precision afforded
by the error term, since $\kmax A(d)/M = O(\kmax M_2^2/M^3) = O(\kmax^3/M)$.)

Now we will apply Lemma~\ref{sumcor} to~$\sum_d m_d$.
Let $A_1=\min_d A(d)$ and $A_2=\max_d A(d)$, with the minimum and
maximum taken over $1\le d\le N_2$, and let $B_1=B_2=B$.
Define $c=80$ and $K=2$.  Then the conditions of Lemma~\ref{sumcor}
apply with $\gamma_0=\gamma_1=0$ and some value of $\gamma_2$
which satisfies $\gamma_2=O(1/M_2)$.
Application of that lemma now gives the desired result.
\end{proof}

Recall the definition of $y_1$, $x_2$, $x_3\in \{0,1\}$ 
given just before the statement of Theorem~\ref{main}.
By combining the last three summations we obtain the following.

\begin{lemma}\label{allcount}
  As $M\to\infty$ and $\kmax^3=o(M)$, we have
  \begin{align*}
      \sum_{d=0}^{x_2N_2}\, \sum_{\ell=0}^{y_1N_1}\, &\sum_{t=0}^{x_3N_3}\,
        w(\ell,d,t)\\
         &= w(0,0,0) \,
      \exp\biggl(  y_1\frac{M_2}{M} +
        x_2 \frac{M_2^2}{2M^2}  + (x_3-x_2)\frac{M_3^2}{2M^3}
      + O\(\kmax^3/M\)\biggr).
    \end{align*}
\end{lemma}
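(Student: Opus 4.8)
The plan is to collapse the triple sum one index at a time, innermost first, using Lemma~\ref{tripcount} for the sum over $t$, then Lemma~\ref{loopcount} for the sum over $\ell$, then Lemma~\ref{doublecount} for the sum over $d$. Since $x_2,x_3,y_1\in\{0,1\}$, whenever one of these equals $0$ the corresponding sum has only its $0$-term, and in that case the claimed contribution of that index holds trivially: multiplying a true value by $\exp(O(\kmax^3/M))$ is harmless, since $O(\kmax^3/M)$ is a valid (if slack) over-estimate of a zero error. So for each index I would record a single statement valid for both values of its parameter, namely $\sum_{t=0}^{x_3N_3}w(\ell,d,t)=w(\ell,d,0)\exp\bigl(x_3\tfrac{M_3^2}{2M^3}+O(\kmax^3/M)\bigr)$ and $\sum_{\ell=0}^{y_1N_1}w(\ell,d,0)=w(0,d,0)\exp\bigl(y_1\tfrac{M_2}{M}+O(\kmax d/M+\kmax^3/M)\bigr)$, the first being Lemma~\ref{tripcount} when $x_3=1$ and trivial when $x_3=0$, and similarly for the second and Lemma~\ref{loopcount}.

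First I would substitute the $t$-statement, which Lemma~\ref{tripcount} provides uniformly over $0\le d\le N_2$ and $0\le\ell\le N_1$, and use that uniformity to pull $\exp\bigl(x_3\tfrac{M_3^2}{2M^3}+O(\kmax^3/M)\bigr)$ out of the $\ell$-sum. Applying the $\ell$-statement (uniform over $0\le d\le N_2$ by Lemma~\ref{loopcount}) then leaves
\[
  \sum_{\ell=0}^{y_1N_1}\sum_{t=0}^{x_3N_3}w(\ell,d,t)
    =w(0,d,0)\exp\!\Bigl(y_1\tfrac{M_2}{M}+x_3\tfrac{M_3^2}{2M^3}
       +O\!\bigl(\tfrac{\kmax d}{M}+\tfrac{\kmax^3}{M}\bigr)\Bigr).
\]
A small point to verify is that the exponent is legitimate, i.e.\ that $\kmax d/M\to0$ uniformly for $d\le N_2$: since $M_2\le\kmax M$ we have $N_2=O(\kmax^2+\log M)$, so $\kmax d/M=O(\kmax^3/M+\kmax\log M/M)=o(1)$ by $\kmax^3=o(M)$.

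The one step requiring genuine care is the outer sum over $d$, and it is exactly what the free parameter $\rho$ in Lemma~\ref{doublecount} was designed to handle. I would write the error in the display above as $\epsilon_d$ with $\abs{\epsilon_d}\le C\kmax d/M+C\kmax^3/M$ for a fixed constant $C$, bound $w(0,d,0)e^{\epsilon_d}$ above and below by $e^{C\kmax^3/M}\,w(0,d,0)\,e^{\pm C\kmax d/M}$, sum over $0\le d\le x_2N_2$, and invoke Lemma~\ref{doublecount} with $\rho=C$ for the upper bound and $\rho=-C$ for the lower bound (reading off the $x_2=0$ case trivially as before). Because the right-hand side of Lemma~\ref{doublecount} does not depend on $\rho$, the two bounds coincide and give $\sum_{d=0}^{x_2N_2}w(0,d,0)e^{\epsilon_d}=w(0,0,0)\exp\bigl(x_2\tfrac{M_2^2}{2M^2}-x_2\tfrac{M_3^2}{2M^3}+O(\kmax^3/M)\bigr)$. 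Multiplying the three collapsed factors and collecting the coefficient $x_3-x_2$ of $\tfrac{M_3^2}{2M^3}$ yields the lemma. Thus the main obstacle is organizing the $d$-dependent error from Lemma~\ref{loopcount} so that Lemma~\ref{doublecount} can absorb it; the rest is routine bookkeeping over the values of $x_2,x_3,y_1$.
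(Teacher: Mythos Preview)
Your proposal is correct and follows essentially the same route as the paper: collapse the $t$-sum via Lemma~\ref{tripcount}, then the $\ell$-sum via Lemma~\ref{loopcount}, then absorb the $d$-dependent error $O(\kmax d/M)$ by applying Lemma~\ref{doublecount} with two suitable values of $\rho$ for matching upper and lower bounds. The only cosmetic slip is that the constant factor in front of the sandwiching inequality should be $e^{\pm C\kmax^3/M}$ rather than $e^{C\kmax^3/M}$, but this is harmless since it is absorbed into the final $O(\kmax^3/M)$.
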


\begin{proof}
  From Lemmas~\ref{tripcount} and \ref{loopcount}, we have
  \begin{equation}\label{ltsum}
   \sum_{\ell=0}^{y_1N_1}\,\sum_{t=0}^{x_3N_3}
    \,w(\ell,d,t)
    =  w(0,d,0)\, \exp\biggl(
      y_1\frac{M_2}{M} + x_3\frac{M_3^2}{2M^3}
      + O\biggl(\frac{\kmax d}{M}+\frac{\kmax^3}{M}\biggr)\biggr)
  \end{equation}
  uniformly over $d$.

  If $x_2=0$, we are finished.  If $x_2=1$, choose two constants
  $\rho_1,\rho_2$ such that the actual value of the error term
  $O(\kmax d/M)$ in~\eqref{ltsum}
  lies in $[\rho_1\kmax d/M,\rho_2\kmax d/M]$ for all 
$d\in \{ 1,\ldots, N_2\}$.
  Applying Lemma~\ref{doublecount} with $\rho\in\{\rho_1,\rho_2\}$
  and noting that the result does not depend on~$\rho$ within the
  precision given by the error term, we are done.~%
\end{proof}

Finally we can prove our main result.

\begin{proof}[Proof of Theorem~\ref{main}]
From Lemma~\ref{switchinganalysis}
and Lemma~\ref{allcount} together with
(\ref{counting-pairings}),
we obtain
\begin{align*}
G(\kvec,\J,\J^*) &= \(1 + O(\kmax^3/M)\)\, 
               |\G_0 - Y|\\
  &= \frac{1+O\(\kmax^3/M\)}{k_1!\cdots k_n!}\,\,
       \sum_{d=0}^{x_2N_2}\, \sum_{\ell=0}^{y_1N_2}\,
          \sum_{t=0}^{x_3N_3} \, w(\ell,d,t)\\
  &=  \frac{w(0,0,0)}{k_1!\cdots k_n!}\, 
      \exp\biggl(  y_1\frac{M_2}{M} +
        x_2 \frac{M_2^2}{2M^2}  + (x_3-x_2)\frac{M_3^2}{2M^3}
      + O\(\kmax^3/M\)\biggr).   
\end{align*}
Now $w(0,0,0) = \ac{0,0,0}$, and it follows from~\cite[Lemma 5.1]{MW91}
that
\begin{align*}
\ac{0,0,0} &= \frac{M!}{(M/2)!\,2^{M/2}}\,
 \exp\left(\!-\frac{M_2}{2M} - \frac{M_2^2}{4M^2} - 
   \frac{M_2^2M_3}{2M^4} + \frac{M_2^4}{4M^5} + \frac{M_3^2}{6M^3}
     + O(\kmax^3/M)\right).
\end{align*}
Combining these two expressions completes the proof.
\end{proof}

\section{Comparison to a na\"\i ve model}\label{s:naive}

Let $p\in (0,1)$ be a probability which we will define later.
Abusing notation slightly, we define the functions
\[ J(z) = \sum_{j\in J} p^j z^j,\quad
 J^*(z) = \sum_{j\in J^*} p^j z^j
\]
and the probability generating functions 
\[ f(z) = \frac{J(z)}{J(1)}, \quad
   g(z) = \frac{J^*(z)}{J^*(1)}.\]
Consider a random symmetric nonnegative integer matrix $A=(a_{ij})$ which
is created as follows:  
\begin{itemize} 
\item All entries on and above the diagonal are independent;
\item For $i=1,\ldots, n$, let
$a_{ii}$ be a randomly chosen element of $J^*$, with\\
$\Pr(a_{ii}=b) = p^b/J^*(1)$ for all $b\in J^*$; 
\item For $1\leq i < j\leq n$ let $a_{ij}$ be a randomly chosen 
element of $J$, with\\ $\Pr(a_{ij}=b) = p^b/J(1)$ 
for all $b\in J$; 
\item Finally, let $a_{ji}=a_{ij}$ for $1\leq i < j\leq n$.
\end{itemize}
We write $\Pr(\cdot)$ to mean probabilities generated by the
above procedure.
If $A_0$ is a fixed matrix which corresponds to an element of 
$\mathcal{G}(\kvec,J,J^*)$ then the probability that $A_0$ is
produced by the above procedure is
\begin{equation}
  \Pr(A_0) = \frac{p^{M/2}}{J(1)^{\binom{n}{2}} J^*(1)^n}.
\label{fred}
\end{equation}
This follows as the probability of $A_0$ depends only on the
sum of the diagonal and above-diagonal
entries (that is, on the number of edges of the corresponding
multigraph).  

For $i=1,\ldots, n$, let $\mathcal{E}_i$ denote the event that the
sum of row $i$ equals $k_i$,
recalling that diagonal entries are weighted by a factor of 2
as in~\eqref{rowsum}.
Now
\[ G(\kvec,J,J^*) =  \frac{\Pr(\mathcal{E}_1\wedge \mathcal{E}_2
    \wedge\cdots\wedge \mathcal{E}_n)}{\Pr(A_0)}
\]
since $\Pr(\cdot)$ is uniform on
(the set of matrices corresponding to) $\mathcal{G}(\kvec,J,J^*)$.
By (incorrectly) assuming that the events 
$\mathcal{E}_1,\ldots, \mathcal{E}_n$ are independent,
we obtain the following na{\"\i}ve estimate of $G(\kvec,J,J^*)$, 
using (\ref{fred}):
\begin{align}
 \Gp(\kvec,J,J^*) 
  &= \frac{\prod_{i=1}^n \Pr(\mathcal{E}_i)}{\Pr(A_0)} \notag\\
  &= \frac{J(1)^{\binom n2} J^*(1)^n} {p^{M/2}}\,
\prod_{i=1}^n\, [z^{k_i}] \,f(z)^{n-1} g(z^2) \notag \\
 &= p^{-M/2}\, J(1)^{-\binom{n}{2}} 
  \, \prod_{i=1}^n \, [z^{k_i}]\,  J(z)^{n-1} J^*(z^2).\label{Gnaive}
\end{align}

Let $\kbar=M/n$ denote the target average row sum.
The value $p_0$ of $p$ which makes the expected row sum equal to $k$
satisfies
\begin{equation}
\label{pdef}
 p_0 = \frac{\kbar}{n} + \frac{(1-2x_2)\kbar^2}{n^2} + O(k/n^2).
\end{equation}
We write $\Gnaive(\kvec,J,J^*)$ for $G_{p_0}(\kvec,J,J^*)$
from now on.  (As we shall see, the exact value of the
$O(k/n^2)$ term does not affect Theorem~\ref{reformulate} below,
within the stated error bound, even though some of the intermediate
formulae we give in the proof are affected.)

We now compare the expression given in  Theorem~\ref{main} with the na\"\i ve
estimate $\Gnaive(\kvec,J,J^*)$.  For future applications, it will be
convenient to express the result
in terms of the scaled central moments $\mu_2$, $\mu_3$ defined by
\[ \mu_r = \frac{1}{M}\, \sum_{i=1}^n (k_i-\kbar)^r\]
for $r=2,3$.

\begin{theorem}
\label{reformulate}
\begin{align}
   G(\kvec,&J,J^*)
   = \sqrt2\, 
            \Gnaive(\kvec,J,J^*) \exp\biggl(
\dfrac14(1-\mu_2)\,\(1+2x_2 +  \mu_2(1-2x_2)\)  \notag \\
  & \hspace*{19mm}  + \frac{6\mu_2\mu_3(x_3-x_2) - \mu_2^3}{2n}
 + \frac{3\mu_2^2(\mu_2^2 - 2\mu_3) + 2\mu_3^2(3x_3-3x_2+1)}{12M} \notag \\
  & \hspace*{19mm}
  + \frac{\mu_2^2 M}{2n^2}(9x_3-9x_2-1) +O(\kmax^3/M)\biggr).
\end{align}
\end{theorem}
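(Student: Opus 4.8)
The plan is to compare the exponent in Theorem~\ref{main} with the logarithm of $\Gnaive(\kvec,J,J^*)$ as given in~\eqref{Gnaive}, extract the constant factor $\sqrt2$, and rewrite the remaining correction terms using the scaled central moments $\mu_2,\mu_3$. First I would take logarithms of~\eqref{Gnaive}: $\log\Gnaive = -\tfrac{M}{2}\log p_0 - \binom n2\log J(1) + \sum_{i=1}^n \log\bigl([z^{k_i}]J(z)^{n-1}J^*(z^2)\bigr)$. The main analytic work is an asymptotic evaluation of the coefficient extraction $[z^{k_i}]J(z)^{n-1}J^*(z^2)$; this is a standard saddle-point / local-limit computation. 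Because $0,1\in J$ and $0\in J^*$, the generating function $J(z)=1+p z+x_2 p^2z^2+\cdots$ has a well-behaved log, and with $p_0=\kbar/n + (1-2x_2)\kbar^2/n^2 + O(k/n^2)$ chosen so the mean row sum is $\kbar$, the saddle point for each $i$ sits near $z=1$. Expanding $\log J(z)$ to the needed order around the relevant saddle point, applying Stirling and a local central limit estimate, and summing over $i$ introduces exactly the power sums $\sum_i k_i = M$, $\sum_i (k_i-\kbar)^2 = M\mu_2$, $\sum_i (k_i-\kbar)^3 = M\mu_3$, together with the $\sqrt{2\pi \cdot(\text{variance})}$ factors from the local limit theorem; it is from the interaction of these Gaussian normalisations with the $\binom n2$ and $M/2$ prefactors that the leftover $\sqrt2$ emerges.

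Next I would convert the factorial prefactor and the $M_r$-polynomial exponent of Theorem~\ref{main} into $p_0$- and $\mu_r$-language. The key translation identities are $M_2 = \sum_i k_i(k_i-1) = M(\mu_2 + \kbar - 1)$ and $M_3 = \sum_i [k_i]_3 = M(\mu_3 + 3(\kbar-1)\mu_2 + (\kbar-1)(\kbar-2))$, obtained by expanding the falling factorials in terms of central moments about $\kbar$. Substituting these into each term of the Theorem~\ref{main} exponent — $(y_1-\tfrac12)M_2/M$, $(x_2-\tfrac12)M_2^2/(2M^2)$, $M_2^4/(4M^5)$, $-M_2^2M_3/(2M^4)$, $(x_3-x_2+\tfrac13)M_3^2/(2M^3)$ — produces a polynomial in $\kbar$, $\mu_2$, $\mu_3$ and $1/n$, $1/M$. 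One then has to be careful about which cross-terms survive the $O(\kmax^3/M)$ error: since $\mu_2 = O(\kmax)$, $\mu_3 = O(\kmax^2)$, $M = \kbar n$ and $\kbar = O(\kmax)$, terms like $\mu_2^2 M/n^2$ are of order $\kmax^3/n$, hence retained, while higher products get absorbed.

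The final step is bookkeeping: collect all $\kbar$-dependent pieces coming from $\log(M!/((M/2)!2^{M/2}k_1!\cdots k_n!))$ versus $-\tfrac{M}{2}\log p_0 - \binom n2\log J(1) + \sum_i(\text{saddle terms})$ and verify they cancel exactly, leaving only the $\mu_r$-expressions displayed in Theorem~\ref{reformulate}. Concretely I expect the term $\dfrac14(1-\mu_2)(1+2x_2+\mu_2(1-2x_2))$ to come from the $(y_1-\tfrac12)M_2/M$ and $(x_2-\tfrac12)M_2^2/(2M^2)$ pieces once the $\kbar$-parts are absorbed into $\log\Gnaive$; the $1/n$-order term from $-M_2^2M_3/(2M^4)$ and $(x_3-x_2)M_3^2/(2M^3)$; the $1/M$-order term from $M_2^4/(4M^5)$, $-M_2^2M_3/(2M^4)$ and $\tfrac13 M_3^2/(2M^3)$; and the $\mu_2^2 M/(2n^2)$ term precisely from the subleading $(1-2x_2)\kbar^2/n^2$ correction in $p_0$ feeding through the saddle expansion and the $\binom n2$ factor.

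The main obstacle is the saddle-point evaluation of $[z^{k_i}]J(z)^{n-1}J^*(z^2)$ to the required relative accuracy $1+O(\kmax^3/M)$ uniformly over $i$: one must expand $\log J$ to fourth order at a saddle point that itself depends on $k_i$ (not just on $\kbar$), control the Gaussian-tail error of the local limit theorem, and keep exact track of the $O(k/n^2)$ ambiguity in $p_0$ (the remark after~\eqref{pdef} asserts it does not matter, but that needs checking). Once that uniform estimate is in hand, the rest is a disciplined but lengthy Taylor expansion and cancellation, with the only subtlety being the correct power-counting of $\mu_2,\mu_3,\kbar$ against the error term. I would organise the computation by first treating the "flat" case $k_1=\cdots=k_n=\kbar$ to pin down the $\sqrt2$ and the $\kbar$-cancellation cleanly, then reintroduce the fluctuations via the $\mu_r$ expansion.
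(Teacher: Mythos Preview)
Your overall plan is sound and would lead to a proof, but it takes a genuinely different analytic route from the paper for the key step of evaluating $[z^{k_i}]\,J(z)^{n-1}J^*(z^2)$.

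The paper does \emph{not} use a saddle-point or local-limit argument. Instead it splits into two cases according to whether $x_2=0$ or $x_2=1$ and factors the generating function explicitly: if $x_2=0$ it writes $J(z)^{n-1}J^*(z^2)=(1+pz)^{n-1}H_0(z)$, and if $x_2=1$ it writes $J(z)^{n-1}J^*(z^2)=(1-pz)^{-(n-1)}H_1(z)$. The leading coefficient $[z^r](1\pm pz)^{\pm(n-1)}$ is then an exact binomial or negative-binomial expression, and the correction factor $H_j(z)$ contributes only finitely many explicit low-order terms plus a tail $\varDelta_j$ bounded via a dominating series $H_j^+(z)$ with nonnegative coefficients. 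This yields $[z^{k_i}]\,J(z)^{n-1}J^*(z^2)=\dfrac{n^{k_i}p^{k_i}}{k_i!}\exp(\cdots)$ directly, with the $1/k_i!$ coming from the binomial coefficient rather than from a Gaussian normalisation. The $\sqrt2$ then arises at the very end, from Stirling's formula applied to $M!/\bigl((M/2)!\,2^{M/2}\bigr)$ versus $(M/e)^{M/2}$, not from an accumulation of $n$ local-limit prefactors.

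What each approach buys: the paper's method is entirely elementary (no contour integrals, no uniform saddle location depending on $k_i$) and the uniformity in $i$ is automatic; the price is the $x_2$ case split and some ad hoc dominating-series bounds. Your saddle-point route is more systematic and avoids the case split, but you have correctly identified its cost: you must locate a saddle point that depends on $k_i$, expand to sufficient order, and control the tails uniformly over $i$ to relative accuracy $O(\kmax^3/M)$, which is real work. One small caution in your sketch: your attribution of the $\mu_2^2 M/(2n^2)$ term solely to the $p_0$ correction is not quite right, since its coefficient $(9x_3-9x_2-1)$ involves $x_3$ and therefore must receive a contribution from the $(x_3-x_2+\tfrac13)M_3^2/(2M^3)$ piece of Theorem~\ref{main} as well; keep this in mind when doing the final bookkeeping.
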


\begin{proof}  
Suppose that
\[ p = \frac{\kbar}{n} + \frac{(1-2x_2)\kbar^2}{n^2} + \frac{ck}{n^2},
\]
where $c = c(n) = O(1)$.
By direct computation we find that
\begin{align}
 p^{M/2} &= \biggl(\frac{k}{n}\biggr)^{\!M/2} 
   \exp\biggl(  \dfrac12 \kbar^2(1-2x_2) - \frac{\kbar^3}{4n} 
   + \frac{ck}{2}
     +   O(\kmax^3/M)\biggr) 
    \label{pexp}
\end{align}
     and
\begin{align}
 J(1)^{\binom n2} &= \exp\biggl( \binom{n}{2} p - 
   \frac{(1-2x_2)p^2 n^2}{4} + \frac{(1-3x_2+3x_3)p^3n^2}{6}
                 + O(\kmax^3/M)\biggr)\notag\\
          &= \exp\biggl( \dfrac12 M - \dfrac14 \kbar(2x_2\kbar + 2 -\kbar)
  + \frac{ck}{2}
     - \frac{\kbar^3(2+3x_2-3x_3)}{6n} + O(\kmax^3/M)\biggr). \label{Jexp} 
\end{align}

To calculate the product in (\ref{Gnaive}) we consider two cases, 
depending on the value of~$x_2$.  
First assume that $x_2=0$ (that is, the value 2 is not permitted off the diagonal).
Then
\[
 J(z)^{n-1}J^*(z^2) = (1+pz)^{n-1} H_0(z),
\]
where 
\[         H_0(z) = \biggl( 1+ \frac{\sum_{i\geq 3} x_i p^i z^i}{1+pz} \biggr)^{\!n-1} 
          \Bigl(1 + \sum_{i\geq 1} y_i p^i z^{2i}\Bigr).
\]
Consequently, for any integer $r$ with $0\le r\leq \kmax$ we have
\begin{align}
  [z^r]\, &J(z)^{n-1}J^*(z^2)\,
   = \, \sum_{j=0}^r \binom{n-1}{r-j}p^{r-j} \, [z^j] H_0(z) \notag\\
   &= \binom{n-1}{r} p^r
        + \binom{n-1}{r-2} y_1 p^{r-1} + \binom{n-1}{r-3} \, x_3(n-1)p^r  + 
        \binom{n-1}{r} p^r \varDelta_0,\label{zrJJ}
\end{align}
where
\[
 \varDelta_0 = \sum_{j=4}^r \binom{n-1}{r-j} \binom{n-1}{r}^{\!\!-1} p^{-j}
  \, [z^j] H_0(z).
\]
Now define
\[
H_0^+(z) = \biggl( 1+ \frac{\sum_{i\geq 3} p^i z^i}{1-pz} \,\biggr)^{\!n-1} 
          \biggl(1 + \sum_{i\geq 1} p^i z^{2i}\biggr)
       = \biggl( 1 + \frac{p^3z^3}{(1-pz)^2}\biggr)^{\!n-1} (1-pz^2)^{-1}.
\]
Notice that the coefficients of $H_0^+(z)$, when expanded as a
Taylor series in $z$, are nonnegative and dominate those of $H_0(z)$.
Also notice that $\binom{n-1}{r-j} \binom{n-1}{r}^{\!\!-1}
=O(1)(r/n)^j$ uniformly for $4\le j\le r$, since $r=o(n^{1/2})$.
Therefore, setting $\alpha= r/pn$,
\begin{align*}
  \abs{\varDelta_0} &= O(1) \sum_{j=4}^r \alpha^j\,[z^j]H_0^+(z)\\
       &= O\( H_0^+(\alpha) - 1 - p\alpha^2 - (n-1)p^3\alpha^3\).
\end{align*}
Since $p\alpha=o(1)$, by Taylor's Theorem we can write
\[  \biggl( 1 + \frac{p^3\alpha^3}{(1-p\alpha)^2}\biggr)^{\!n-1}
   \negthickspace = 1 + (n-1)\frac{p^3\alpha^3}{(1-p\alpha)^2} 
        + O(n^2p^6\alpha^6). \]
Substituting this into the definition of $H_0^+(\alpha)$, we find that
\[ H_0^+(\alpha) - 1 - p\alpha^2 - (n-1)p^3\alpha^3
 = \frac{p^2\alpha^4(1+o(1))}{(1-p\alpha)^2(1-p\alpha^2)}
   = O(p^2\alpha^4), \]
and so
\[ \varDelta_0 = O(r\kmax^3/M^2). \]
Applying this bound to~\eqref{zrJJ}, we have
\begin{align}
 [z^r]&\,J(z)^{n-1}J^*(z^2) \notag\\
     &= \binom{n-1}{r} p^r \exp\biggl( \frac{y_1r(r-1)}{kn}
                 + \frac{x_3 r^3}{n^2}  + O(r\kmax^3/M^2) \biggr)\notag \\
     &= \frac{n^r p^r}{r!} \exp\biggl( -\frac{r(r+1)}{2n} + \frac{y_1r(r-1)}{kn}
                 + \frac{ r^3(6x_3-1)}{6n^2} + O(r\kmax^3/M^2) \biggr).
                    \label{x=0case}
\end{align}

Now we assume that $x_2=1$ and perform a similar calculation. 
We have
\[ J(z)^{n-1}\, J^*(z^2) = (1-pz)^{-n+1} H_1(z), \]
where
\[
      H_1(z) =   \biggl((1-pz)\sum_{i\geq 0} x_i p^i z^i\biggr)^{\!n-1}
          \Bigl(1+\sum_{i\geq 1} y_ip^i z^{2i}\Bigr).
\]
We find that for any $r$ with $0\le r\le\kmax$,
\begin{align*}
 [z^r]\,J(z)^{n-1}J^*(z^2) 
  &= \binom{n+r-2}{r} p^r
 + \binom{n+r-4}{r-2} y_1 p^{r-1} \\
 &{\quad}+ \binom{n+r-5}{r-3} (x_3-1) (n-1)\, p^r 
   + \binom{n+r-2}{r}\, p^r  \varDelta_1,
\end{align*}
where
\[ \varDelta_1 = \sum_{j=4}^r \binom{n+r-j-2}{r-j} \binom{n+r-2}{r}^{\!-1} p^{-j}
   [z^r] H_1(z). \]
The coefficients of $H_1(z)$ are dominated by those of $H_1^+(z)$, where
\[
   H_1^+(z) = \biggl( 1 + \sum_{i\ge 3} p^iz^i\biggr)^{\!n-1}
      \biggl(1 + \sum_{i\ge 1}p^iz^{2i}\biggr)
     = \biggl( 1 + \frac{p^3z^3}{1-pz} \biggr)^{n-1} \( 1 - pz^2)^{-1}.
\]
Arguing as before, this implies that $\varDelta_1=O(r\kmax^3/M^2)$,
and so we have
\begin{align}
[z^r]\,J(z)^{n-1}J^*(z^2)
     &= \binom{n+r-2}{r} p^r \exp\biggl( \frac{y_1r(r-1)}{kn}
                 + \frac{(x_3-1) r^3}{n^2}  + O(r\kmax^3/M^2) \biggr)\notag \\
     & {\kern-1.2cm}=\frac{n^r p^r}{r!} \exp\biggl( \frac{r(r-3)}{2n} + \frac{y_1r(r-1)}{kn}
                 + \frac{ r^3(6x_3-7)}{6n^2} + O(r\kmax^3/M^2) \biggr).
                 \label{x=1case} 
\end{align}
Combining the cases \eqref{x=0case} and \eqref{x=1case} we have,
for $i=1,\ldots, n$,
\begin{align*}
[z^{k_i}]\, J(z)^{n-1}J^*(z^2) &= \frac{n^{k_i}\, p^{k_i}}{{k_i}!} \exp\biggl(
   - \biggl(\frac{1 + 2x_2}{2n}  + \frac{y_1}{kn}\biggr)\, k_i -
  \biggl(\frac{1-2x_2}{2n} - \frac{y_1}{kn}\biggr)\, k_i^2 \notag \\
 & \hspace*{3.5cm}  - \frac{1+6x_2-6x_3}{6n^2}\, k_i^3 
 + O(k_i\kmax^3/M^2)\biggr). 
\end{align*}
Multiplying these $n$ equations together gives
\begin{align} 
\prod_{i=1}^n \, [z^{k_i}]&\, J(z)^{n-1}J^*(z^2) \notag\\
&= \frac{n^M p^M}{\prod_{i=1}^n k_i!}\,
  \exp\biggl(  \frac{M_2(2x_2-1) -2M }{2n} + \frac{y_1 M_2}{kn}  \notag \\
  &  \hspace*{40mm}  {}
    - \frac{(6x_2-6x_3+1)M_3}{6n^2} + O(\kmax^3/M)\biggr).
\label{rowexp}
\end{align}

We now substitute (\ref{pexp}), (\ref{Jexp}) and (\ref{rowexp}) into 
(\ref{Gnaive}).
Writing the result in terms
of $\mu_2$, $\mu_3$, $M$ and $n$, using the identities
 \begin{align*}
 M_2 &= M \mu_2  + (k-1)M,\\
 M_3 &= M \mu_3  + 3(k-1)M \, \mu_2 + (k-1)(k-2)M
 \end{align*}
we find that
\begin{align*}
& \Gnaive(\kvec, J,J^*) \\
 &= \frac{1}{\prod_{i=1}^n k_i!} \,
 \left(\frac{M}{e}\right)^{\! M/2} \!\! \exp\biggl( -y_1(1-\mu_2) +
  \frac{(2y_1+ 2x_2(\mu_2-1) - \mu_2)k}{2} +   \frac{(2x_2-1)k^2}{4} 
   \biggr.\\
  & \hspace*{12em}\biggl. {} 
    - \frac{k(6x_2-6x_3+1)(2\mu_3+6 \mu_2 k+k^2)}{12 n} 
    + O(\kmax^3/M)\biggr).
\end{align*}
(At this point we can observe that no remaining
terms depend on $c$, which verifies our earlier claim that the
exact value of the $O(k/n^2)$ term in (\ref{pdef}) does not
affect the statement of this theorem.)
The proof 
is completed by applying Theorem~\ref{main} and using 
Stirling's formula.
\end{proof}

\begin{corollary}
\label{naive-corollary}
Under the conditions of Theorem~\ref{reformulate}, 
if\/ $\mu_2 = O(M^{1/6})$ then
 \begin{align*}   G&(\kvec,J,J^*) \\ 
    &= \sqrt2\, 
         \Gnaive(\kvec,J,J^*) \,
      \exp\Bigl(\, \dfrac14\, (1-\mu_2)\bigl(1 + 2x_2 + \mu_2(1-2x_2)\bigr)
    + O\(\kmax^2/M^{2/3}\)  \Bigr). 
             \end{align*}
If the stronger bound $\mu_2 = O(\kmax^{1/2})$ holds then 
\begin{align*}   G&(\kvec,J,J^*) \\ 
    &= \sqrt2\, 
         \Gnaive(\kvec,J,J^*) \,
      \exp\Bigl( \dfrac14\, (1-\mu_2)\bigl(1 + 2x_2 + \mu_2(1-2x_2)\bigr)
    + O\(\kmax^3/M\)\Bigr). 
             \end{align*}
Finally, if $\kvec=(k,k,\ldots, k)$ is a regular degree sequence
with $kn$ even, then
\begin{equation}\label{magic}
G(\kvec,J,J^*) = \sqrt{2} \, \exp\( \dfrac{1}{4}(1+2x_2)
  +O(k^2/n) \)   \, \Gnaive(\kvec,J,J^*).
\end{equation}
\end{corollary}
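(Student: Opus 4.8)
The plan is to obtain all three statements as direct consequences of Theorem~\ref{reformulate}: under each hypothesis I will show that every term in the exponent of Theorem~\ref{reformulate}, other than $\tfrac14(1-\mu_2)\(1+2x_2+\mu_2(1-2x_2)\)$, is absorbed into the stated error term.

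First I would record some elementary bounds on the scaled central moments. Since $0\le k_i\le\kmax$ and $0<\kbar\le\kmax$, we have $\abs{k_i-\kbar}\le\kmax$, and therefore $\abs{\mu_3}\le\tfrac{1}{M}\sum_i\abs{k_i-\kbar}^3\le\kmax\mu_2$. From $M_2\le\kmax M$ together with the identity $M_2=M\mu_2+(\kbar-1)M$ (used in the proof of Theorem~\ref{reformulate}) we get $\mu_2\le\kmax-\kbar+1=O(\kmax)$. Finally, $M=n\kbar$ gives $1/n=\kbar/M\le\kmax/M$ and, crucially, $M/n^2=\kbar^2/M\le\kmax^2/M$; this last identity is the one to keep in mind, since the term $\mu_2^2 M/(2n^2)$ in Theorem~\ref{reformulate} looks dangerous only until one rewrites it as $O\(\mu_2^2\kmax^2/M\)$.

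Next I would bound the three remaining terms of Theorem~\ref{reformulate}, using $\abs{\mu_3}\le\kmax\mu_2$ and $\mu_2=O(\kmax)$ throughout. The term with denominator $2n$ is $O\((\kmax\mu_2^2+\mu_2^3)/n\)=O\(\kmax^2\mu_2^2/M\)$; the term with denominator $12M$ is $O\((\mu_2^4+\kmax\mu_2^3+\mu_3^2)/M\)=O\(\kmax^2\mu_2^2/M\)$; and the term with denominator $2n^2$ is $O\(\mu_2^2\kmax^2/M\)$ as noted above. So the three of them together contribute $O\(\kmax^2\mu_2^2/M\)$. If $\mu_2=O(M^{1/6})$ then $\mu_2^2=O(M^{1/3})$, so this is $O\(\kmax^2/M^{2/3}\)$; and since $\kmax^3=o(M)$, the error term $O(\kmax^3/M)$ already present in Theorem~\ref{reformulate} is also $O\(\kmax^2/M^{2/3}\)$, which proves the first statement. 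If instead $\mu_2=O(\kmax^{1/2})$, then $\mu_2^2=O(\kmax)$ and $O\(\kmax^2\mu_2^2/M\)=O(\kmax^3/M)$, which proves the second. The regular case then follows immediately (from either statement, or directly from Theorem~\ref{reformulate}): $k_i\equiv k$ forces $\mu_2=\mu_3=0$, so $\kmax=k$, $M=kn$, all three of the above terms vanish, and Theorem~\ref{reformulate} collapses to $\sqrt2\,\exp\(\tfrac14(1+2x_2)+O(\kmax^3/M)\)\,\Gnaive(\kvec,J,J^*)$, which is exactly~\eqref{magic}.

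There is no substantive obstacle here; the argument is purely a matter of bookkeeping. The only point requiring a little care is tracking which of $\kmax$, $M$, $n$ and $\mu_2$ appears in each term --- in particular, using $M/n^2=\kbar^2/M$ rather than reading $M/n^2$ as ``large'', so that the $\mu_2^2 M/(2n^2)$ term is seen to be genuinely negligible.
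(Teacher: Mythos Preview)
Your proposal is correct and follows exactly the approach of the paper: the paper's proof merely states that one should check the extra terms in Theorem~\ref{reformulate} against the claimed error bounds using $\mu_2\le\kmax$ and $\abs{\mu_3}\le\kmax\mu_2$, and you have carried out precisely that verification in full. The observation $M/n^2=\kbar^2/M\le\kmax^2/M$ that you highlight is indeed the only step that requires any thought.
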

\begin{proof}
For the first two statements, 
we just need to check that all additional terms inside
the exponential factor in Theorem~\ref{reformulate} are covered by the
claimed error bounds.
For this it is useful to note that 
$\mu_2\le\kmax$ and $\abs{\mu_3}\le \kmax\mu_2$.
Finally, when $\kvec$ is a regular degree sequence we have 
$\mu_2=\mu_3=0$ and \eqref{magic} follows.
\end{proof}

The constant $\frac14(1+2x_2)$ in~\eqref{magic} was previously
noted for simple sparse regular graphs in~\cite{MW91}, and in~\cite{Loopy}
for simple sparse regular graphs with loops allowed.
Interestingly, with different error terms, the same constant was observed
in~\cite{MW91} for dense simple regular graphs, in~\cite{Loopy} for
dense simple regular graphs with loops allowed, and in~\cite{MM11} for dense
symmetric integer matrices with zero diagonal.
In these three cases, 
we conjectured that~\eqref{magic} holds (with some vanishing error term) for all 
degree sequences
except for the two extreme cases of graphs with no edges and
graphs with all possible edges.  The corresponding conjectures may be
less likely to be true in full generality for all $J,J^*$.

From the first expression of Corollary~\ref{naive-corollary}, 
we see that $G(\kvec,J,J^*)$ is closely approximated by
$\sqrt{2}\, \Gnaive(\kvec,J,J^*)$ whenever $\mu_2$ is close to 1.
It appears that, in the random matrix model described at
the start of this section, $\mu_2$ will be concentrated around 1
with high probability whenever $p$ tends to zero slowly enough to ensure 
that $M\to\infty$ with high probability, but quickly enough to
ensure that $\kmax^3 = o(M)$ with high probability.
If so, this will lead to a model for the degree sequences of sparse
multigraphs analogous to that obtained by McKay and Wormald~\cite{MW97}
for graphs, and by McKay and Skerman~\cite{MS} for bipartite graphs
and directed graphs.
Details will be given in a future paper.

\medskip

The authors are grateful to the referees for their careful reading, which led
to several improvements.

\def\volno#1{\textbf{#1}}
\def\aufont#1{\textsc{#1}}
\def\pagenos#1{#1}

\end{document}